\newtheorem{theorem}{Theorem}[section]
\newtheorem{proposition}[theorem]{Proposition}
\newtheorem{remark}[theorem]{Remark}
\newtheorem{lemma}[theorem]{Lemma}
\newtheorem{definition}[theorem]{Definition}
\def\N{\mathbb N}
\def\C{\mathbb C}
\def\R{\mathbb R}
\def\Q{\mathbb Q}
\def\ve{\varepsilon}
\def\al{\alpha}
\def\la{\lambda}
\def\ovl{\overline}
\begin{document}

\title[{Space-filling curves: topological and algebraic structure}]
{The set of space-filling curves: topological and algebraic structure}

\date{}

\author[Bernal-Gonz\'alez, Calder\'on-Moreno and Prado-Bassas]{L.~Bernal-Gonz\'alez, M.C.~Calder\'on-Moreno and J.A.~Prado-Bassas}
\address{Departamento de An\'alisis Matem\'atico,\newline\indent
Facultad de Matem\'aticas, \newline\indent
Universidad de Sevilla, \newline\indent
Apdo. 1160, Avenida Reina Mercedes, \newline\indent
Sevilla, 41080, Spain.}
\email{lbernal@us.es, mccm@us.es {\rm and} bassas@us.es}

\thanks{{\sl This paper is dedicated to Professor Jos\'e Bonet Solves on his 60th birthday.}}
\thanks{The authors have been partially supported by the Plan Andaluz de Investigaci\'on de la
Junta de Andaluc\'{\i}a FQM-127 Grant P08-FQM-03543 and by MEC Grant MTM2012-34847-C02-01.}

\keywords{Peano curve, space-filling curve, lineability, spaceability, algebrability}
\subjclass[2010]{15A03, 20M05, 46E15, 54E40, 54F15}

\begin{abstract}

In this paper, a study of topological and algebraic properties of two families of functions from the
unit interval $I$ into the plane $\R^2$ is performed. The first family is the collection of all Peano curves,
that is, of those continuous mappings onto the unit square. The second one is the bigger set of all
space-filling curves, i.e.~of those continuous functions $I \to \R^2$ whose images have positive Jordan content.
Emphasis is put on the size of these families, in both topological and algebraic senses, when endowed with
natural structures.
\end{abstract}

\maketitle




\section{Introduction}

\quad In 1890 G.~Peano  {\cite{peano}} showed the existence of an astonishing mathematical object, namely, a curve filling the unit square.
To be more precise, he constructed a {\it continuous surjective} mapping $I \to I^2$, where $I = [0,1]$ is the closed unit
interval in the real line $\R$ and $I^2 = [0,1] \times [0,1]$.

\vskip .15cm

Lebesgue \cites{gelbaumolmsted1964,gelbaumolmsted2003,lebesgue1904} was probably
the first to show an example of a function $f: \R \to \R$ that is {\it surjective} in a strong sense.
Specifically, it satisfies $f(J) = \R$ for every nondegenerate interval $J$.
Since then, many families of surjections $\R \to \R$,
even in much stronger senses, have been presented (see {\cite{jordan1998,jones1942,foran1991}}).
Nevertheless, each of these functions is nowhere continuous. Of course, by using a bijection $\R \to I^2$ or $\R \to \R^2$,
surjections $\R \to I^2$ or $\R \to \R^2$ (or even $I \to I^2$) can be constructed, but
their continuity is far from being guaranteed.

\vskip .15cm

Peano's result admits a topological extension, and in fact a topological characterization, which is given by
the {\it Hahn--Mazurkiewicz theorem} (see e.g.~\cite[Theorem 31.5]{willard} or \cite{HoY}): a Hausdorff topological
space \,$Y$ is a continuous image of the unit interval if and only if it is a compact, connected, locally connected,
and second-countable space. Such a space $Y$ is called a {\it Peano space.}
Equivalently, by well-known metrization theorems, a Peano space is a compact, connected, locally connected metrizable
topological space. Given two topological spaces $X$ and $Y$, the set of continuous (continuous surjective, resp.)
mappings $X \to Y$ will be denoted by \,$C(X,Y)$ ($CS(X,Y)$, resp.). Then the family of Peano curves is \,${\mathcal P}
:= CS(I,I^2)$. If \,$Y$ is a Peano space, we also denote ${\mathcal P}_Y := CS(I,Y)$, so that ${\mathcal P} = {\mathcal P}_{I^2}$.

\vskip .15cm

There are several extensions of the notion of Peano curve on $\R^N$, with $N \ge 2$.
Since the case $N=2$ is illuminating enough, we will restrict ourselves to it. For ins\-tan\-ce, in \cite{sagan},
the next notion is given. By \,$c(A)$ \,it is denoted the Jordan content of a Jordan measurable set $A \subset \R^2$
(see Section 2 for definitions).

\begin{definition} \label{def spacefilling curve}
{\rm We say that a continuous function \,$\varphi : I \to \R^2$ \,is a {\it space-filling curve}
provided that \,$\varphi (I)$ is Jordan measurable and \,$c(\varphi (I)) > 0$.}
\end{definition}

We can relax this condition by defining a {\it $\la$-space-filling curve} --where $\la$
denotes Lebesgue measure on $\R^2$-- as a continuous function $f:I \to \R^2$ with $\lambda (f(I)) > 0$.
This is not equivalent to the former definition; as a matter of fact, Osgood \cite{Osgood,sagan}
constructed in 1903 a Jordan curve, that is, a continuous {\it injective}
\,function $\psi :I \to \R^2$, such that $\la (\psi (I)) > 0$; here $\psi (I)$ cannot be Jordan measurable.
Other related notions can be found in \cite{moramira} and \cite{Ubeda2006}.
The symbol \,$\mathcal{SF}$ \,will stand for the set of all space-filling curves in the sense of Definition
\ref{def spacefilling curve}.

\vskip .15cm

The main concern of this paper is to study both families ${\mathcal P}$ and $\mathcal{SF}$ from
the topological-algebraic point of view, with special emphasis on the size of such sets,
rather than on properties of individual members of them. For this, ${\mathcal P}$ and $\mathcal{SF}$
are supposed endowed with their natural topologies. The diverse notions of largeness that will be
considered, together with other preliminaries, are compiled in Section 2. Finally, Sections 3 and 4
contain our main results, which demonstrate
the existence of large --topological or algebraic-- structures within the mentioned families.

\section{Topological and linear size concepts}

\quad When dealing with subsets of a metric space $(X,d)$, one way to describe their smallness is by
means of the notion of porosity, introduced by Dolzenko \cite{Dolzenko} in 1967 for the real line and
generalized by Zaj\'{\i}$\check{\rm c}$ek \cite{Zajicek}. Here we use a slightly stronger notion of porosity \cite{Zamfirescu}.
By $B(x,r)$ we denote the open ball in $X$ with center $x \in X$ and radius $r > 0$,
while $\ovl{A}$ stands for the closure of a set $A$ in a given topological space.

\begin{definition}
{\rm A subset $A$ in a metric space $(X,d)$ is called {\it porous} if there is $\al > 0$ such that
for each $x \in X$ and each $\ve > 0$ there exists $y \in B(x,\ve )$ such that
$$
B(y,\al \, d(x,y)) \cap A = \varnothing .
$$
If the above number $\al > 0$ can be chosen as close to $1$ as we wish then $A$ is called
{\it strongly porous.}}
\end{definition}

It is well known that any porous set $A$ is nowhere dense, that is, its interior $\ovl{A}^0 = \varnothing$.
In fact, porosity is a notion strictly stronger than nowhere density. Porosity will be considered
in the context of Peano curves.

\vskip .15cm

In a completely metrizable topological space $X$ (so that Baire's theorem applies),
one way to describe smallness or largeness is by meagerness: a subset $A \subset X$ is said to
be {\it meager} or {\it of first category} if it is a countable union of nowhere dense subsets;
and $A$ is called {\it residual} if the complement $X \setminus A$ is meager or, equivalently, if $A$
is a countable intersection of dense open sets. Hence, in a topological sense, a residual set is very large,
and in fact the existence of many ``strange'' mathematical objects has been {stated} by proving
that their set is residual (in some appropriate topological space). Incidentally, each set of such mathematical
objects turns to be huge.

\vskip .15cm

A different, recently introduced approach to study the size of a family of objects arises from the
{\it theory of lineability.} The following notions can be found in \cite{AronGS,APS_Studia,bartoszewiczglab2012b,BarGP,bernal2010,BerPS,gurariyquarta2004,seoane2006}.

\begin{definition}
{\rm If $X$ is a vector space, $\alpha$ is a cardinal number and $A \subset X$, then $A$ is said to be:
\begin{enumerate}
\item[$\bullet$] {\it lineable} if there is an infinite dimensional vector space $M$ such that $M \setminus \{0\} \subset A$,
\item[$\bullet$] {\it $\alpha$-lineable} if there exists a vector space $M$ with dim$(M) = \alpha$ and $M \setminus \{0\} \subset A$
(hence lineability means $\aleph_0$-lineability,
where $\aleph_0 = {\rm card}\,(\N )$, the cardinality of the set of positive integers), and
\item[$\bullet$] {\it maximal lineable} in $X$ if $A$ is ${\rm dim}\,(X)$-lineable.
\end{enumerate}
If, in addition, $X$ is a topological vector space, then $A$ is said to be:
\begin{enumerate}
\item[$\bullet$] {\it dense-lineable} in $X$
whenever there is a dense vector subspace $M$ of $X$ satisfying $M \setminus \{0\} \subset A$,
\item[$\bullet$] {\it maximal dense-lineable} in $X$
whenever there is a dense vector subspace $M$ of $X$ satisfying $M \setminus \{0\} \subset A$ and
dim$\,(M) =$ dim$\,(X)$, and
\item[$\bullet$] {\it spaceable} in $X$ if there is a closed infinite dimensional vector
subspace $M$ such that $M \setminus \{0\} \subset A$.
\end{enumerate}
When $X$ is a topological vector space contained in some (linear) algebra then $A$ is called:
\begin{enumerate}
\item[$\bullet$] {\it algebrable} if there is an algebra \,$M$ so
    that $M \setminus \{0\} \subset A$ and $M$ is infinitely generated, that is, the cardinality of any system of generators of \,$M$ is infinite, and
\item[$\bullet$] {\it strongly algebrable} if, in addition, the algebra $M$ can be taken free.
\end{enumerate}
}
\end{definition}

Note that if $X$ is contained in a commutative algebra then a set $B \subset X$ is a generating set of some {\it free} algebra contained in $A$
if and only if for any $N \in \N$, any nonzero polynomial $P$ in $N$ variables without constant term and any distinct $f_1,...,f_N \in B$, we have
$P(f_1,...,f_N) \ne 0$ and $P(f_1,...,f_N) \in A$. Observe that strong-algebrability $\Rightarrow$ algebrability $\Rightarrow$ lineability, and none of these implications can be reversed, see \cite{bartoszewiczglab2012b} and \cite[p.~74]{BerPS}.

\vskip .15cm

From Peano's result, it is not difficult to extend his filling curve $I \to I^2$ to a continuous surjective
function $\R \to \R^2$. This can be generalized as to obtain that $CS(\R^m,\R^n) \ne \varnothing$ for all $m,n \in \N$.
In fact, Albuquerque, Bernal, Ord\'o\~{n}ez, Pellegrino and Seoane \cite{nga,AlbBerPelSeo,BernalOrd} have recently
shown that $CS(\R^m,\R^n)$ is maximal dense-lineable and spaceable in $C(\R^m,\R^n)$, and that $CS(\R^m,\C^n)$ is
strongly $\mathfrak c$-algebrable (here $\mathfrak c$ stands for the cardinality of the continuum, $\C$ denotes the
complex field, and the algebra structure of $C(\R^m,\C^n)$ is defined coordenatewise). In \cite{AlbBerPelSeo}, the
lineability of the families $CS(\R^m,Y)$, where $Y$ represents some relevant subspaces of infinite dimensional
Euclidean spaces, is also analyzed. To summarize, these diverse CS-families are large in several algebraic (or
topological-algebraic) senses.

\vskip .15cm

It must be said that the mentioned results in \cite{nga,AlbBerPelSeo,BernalOrd}
were the inspiration for the present paper, but there is an important point which is why the methods given in them
cannot be directly reproduced in our setting. Namely, our starting space is the {\it compact} interval $I$. Hence $f(I)$ is compact for
any continuous mapping on $I$, so $f(I)$ is never ``too much large''. Furthermore, our family $\mathcal P$ is not even
stable under scaling, which causes that the study of lineability of $\mathcal P$ makes no sense.

\vskip .15cm

In order to investigate the algebraic structure of $\mathcal P$, let us introduce the following concept.

\begin{definition}
{\rm Assume that $(X,*)$ is a semigroup and that $A \subset X$. We say that $A$ is {\it semigroupable} whenever
there exists an infinitely generated semigroup $G \subset A$.}
\end{definition}

\begin{remark}
{\rm We recall that a semigroup \,$G$ \,is called infinitely generated whenever it is not finitely generated, that is,
there does {\it not} exist a finite set $F \subset X$ such that every $x \in G$ can be written as a finite product
$x = a_1^{m_1} * \cdots * a_p^{m_p}$, with $a_1, \dots , a_p \in F$ and $m_1, \dots , m_p \in \N$
(of course, $p$, $a_i$ and $m_i$ depend upon $x$). The $a_i$'s are not necessarily different: take into account
that $(X,*)$ might be noncommutative. Nevertheless, the semigroup $X$ that will be considered in this paper is
$C(I,I^2)$, where the operation $*$ is the coordenatewise multiplication, which is commutative. Hence the $a_i$'s
can be taken different in this case.}
\end{remark}

Recall that if $E$ is a Banach space then a sequence $\{x_n\}_{n \ge 1}$
is called a basic sequence whenever it is a Schauder basis of its generated closed vector subspace, that is, whenever every vector
$x \in \ovl{\rm span} \{x_n\}_{n \ge 1}$ can be uniquely represented by a series $x = \sum_{n \ge 1} \la_n x_n$ converging in the norm $\| \cdot \|$ of $E$. By Nikolskii's theorem (see for instance \cite{diestel1984}), a sequence $\{x_n\}_{n \ge 1} \subset E \setminus \{0\}$ is basic if and only if there is a constant $\alpha \in (0,+\infty )$ such that, for every pair
$r,s \in \N$ with $s \ge r$ and every finite sequence of scalars $a_1, \dots , a_s$, one has
$$\big\|\sum_{n=1}^r a_n x_n\big\| \leq \alpha \big\|\sum_{n=1}^s a_n x_n\big\| .$$
For any $N \in \N$, we will consider  the norm
$\|f\| = \sup_{t \in I} \|f(t)\|_1$ in the space $C(I,\R^N)$, which makes it a Banach space; here $\| \cdot \|_1$ represents
the $1$-norm in $\R^N$, given by $\|(x_1, \dots ,x_N)\|_ 1 = \max_{1 \le i \le N} |x_i|$.
In Section 4 the next lemma --which is a direct application of Nikolskii's theorem-- will be needed.

\begin{lemma} \label{Lemma-disjointsupports}
Assume that $\{f_n\}_{n \ge 1}$ is a sequence in $C(I,\R^N) \setminus \{0\}$ such that the supports $\{t \in I: \, f_n(t) \ne 0\}$
$(n=1,2, \dots )$ are mutually disjoint. Then $\{f_n\}_{n \ge 1}$ is a basic sequence in $C(I,\R^N)$.
\end{lemma}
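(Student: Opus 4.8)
The plan is to verify the Nikolskii criterion stated just above the lemma, and in fact to show it holds with the optimal constant $\alpha = 1$. The whole point is that disjoint supports turn the sup-norm of a linear combination into a maximum over the individual ``blocks'', which makes the required inequality essentially automatic.

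First I would record the key pointwise observation. For each $n$ set $S_n = \{t \in I : f_n(t) \ne 0\}$; by continuity of $f_n$ these are open subsets of $I$, and by hypothesis they are mutually disjoint. Consequently, for every $t \in I$ at most one index $m$ satisfies $f_m(t) \ne 0$. Hence, for any scalars $a_1,\dots,a_s$ and any $t$, the vector $\big(\sum_{n=1}^s a_n f_n\big)(t)$ equals $a_m f_m(t)$ when $t \in S_m$ (for the unique such $m$), and equals $0$ when $t \notin \bigcup_n S_n$.

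Next I would compute the norm. Taking the $\|\cdot\|_1$-norm pointwise and then the supremum over $t \in I$, and using that each $f_n$ vanishes off $S_n$ (so that $\sup_{t \in I}\|f_n(t)\|_1 = \sup_{t \in S_n}\|f_n(t)\|_1 = \|f_n\|$), I obtain the block identity
$$\Big\| \sum_{n=1}^s a_n f_n \Big\| = \max_{1 \le n \le s} |a_n|\,\|f_n\| .$$
Since each $f_n \ne 0$ we have $\|f_n\| > 0$, so the right-hand side is a genuine maximum of nonnegative quantities and the sequence indeed lies in $C(I,\R^N) \setminus \{0\}$ as required.

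Finally, for any pair $r,s \in \N$ with $s \ge r$, the maximum over $\{1,\dots,r\}$ is dominated by the maximum over $\{1,\dots,s\}$, whence
$$\Big\| \sum_{n=1}^r a_n f_n \Big\| = \max_{1 \le n \le r} |a_n|\,\|f_n\| \;\le\; \max_{1 \le n \le s} |a_n|\,\|f_n\| = \Big\| \sum_{n=1}^s a_n f_n \Big\| .$$
This is precisely the Nikolskii inequality with $\alpha = 1$, so $\{f_n\}_{n \ge 1}$ is a basic sequence. I do not anticipate a genuine obstacle: the only points meriting care are the justification that at most one summand is nonzero at each $t$ (which is exactly the disjoint-support hypothesis) and the evaluation of the sup-norm as a maximum over the blocks; everything else is routine, and it is worth noting that the constant $1$ shows the basis is in fact bimonotone.
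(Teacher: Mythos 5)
Your proof is correct and follows exactly the route the paper intends: the paper gives no separate proof of this lemma, stating only that it is ``a direct application of Nikolskii's theorem,'' and your verification of the Nikolskii inequality via the block identity $\|\sum_{n=1}^s a_n f_n\| = \max_{1\le n\le s}|a_n|\,\|f_n\|$ is precisely that application, carried out with the optimal constant $\alpha=1$.
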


The next assertion --which is proved in \cite[Theorem 2.3]{BernalOrd}
(see also \cite{arongarciaperezseoane2009,bernal2008,bernal2010})-- will be
useful in Section 4 to get dense-lineability from mere lineability.

\begin{theorem} \label{Thm-denselineability-OrdonezLBG}
Assume that $E$ is a metrizable separable topological vector space and that \,$\al$ is an infinite cardinal number.
Let $A,B \subset E$ be two subsets such that $A$ is $\al$-lineable, $B$ is dense-lineable, $A \cap B = \varnothing$
and $A+B \subset A$. Then $A$ contains a dense vector space $M$ with {\rm dim}$(M) = \al$.
\end{theorem}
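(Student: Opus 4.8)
The plan is to build the required space $M$ by ``grafting'' a countable family of small perturbations onto a large linearly independent family coming from the $\alpha$-lineability of $A$. First I would use the hypotheses to fix a vector space $V$ with $\dim V = \alpha$ and $V \setminus \{0\} \subset A$, and a dense vector subspace $W$ of $E$ with $W \setminus \{0\} \subset B$. Since $A \cap B = \varnothing$, any nonzero vector of $V \cap W$ would lie in $A \cap B$; hence $V \cap W = \{0\}$ and the sum $V + W$ is direct. Because $E$ is separable and metrizable, the dense subspace $W$ contains a countable subset whose span $W_0$ is a countable-dimensional dense subspace of $E$ with $W_0 \setminus \{0\} \subset B$ and $V \cap W_0 = \{0\}$. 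I then fix a basis $\{w_n\}_{n \ge 1}$ of $W_0$.

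The naive candidate $V + W_0$ fails, since it contains $W_0 \setminus \{0\} \subset B$, which is disjoint from $A$. Working inside the algebraic direct sum $V \oplus W_0$, the requirement $M \setminus \{0\} \subset A$ is equivalent to demanding that every nonzero element of $M$ have nonzero $V$-component: if the $V$-component $c$ of an element is nonzero, then either the $W_0$-component is $0$ and the element equals $c \in V \setminus \{0\} \subset A$, or it is some $w \in W_0 \setminus \{0\} \subset B$ and the element equals $c + w \in A + B \subset A$. So the real task is to produce, inside $V \oplus W_0$, a subspace of dimension $\alpha$ meeting $W_0$ only at $0$, yet dense in $E$.

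To reconcile density with the constraint of avoiding $W_0$, I would exploit that $V$ is a vector space and therefore contains nonzero vectors arbitrarily close to $0$. Fix a basis $\{v_i\}_{i \in I}$ of $V$ with $|I| = \alpha$, single out a countable subfamily indexed by pairs, $\{v'_{n,k}\}_{n,k \ge 1}$, chosen so that the complementary index set $I'$ still satisfies $|I'| = \alpha$. Fixing a metric $d$ inducing the topology of $E$ and using continuity of scalar multiplication, choose nonzero scalars $t_{n,k}$ with $d(t_{n,k}\,v'_{n,k},0) < 1/k$, and set $u_{n,k} := t_{n,k}\,v'_{n,k}$, so that $u_{n,k} \to 0$ as $k \to \infty$ for each fixed $n$. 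Now define
$$M := \operatorname{span}\big(\{w_n + u_{n,k} : n,k \ge 1\} \cup \{v_i : i \in I'\}\big).$$
Since $\{u_{n,k}\} \cup \{v_i : i \in I'\}$ is merely a rescaling of the basis of $V$, it is linearly independent in $V$; projecting any vanishing combination of the generators of $M$ onto its $V$-component forces all coefficients to vanish. This simultaneously shows that the generators are linearly independent, so $\dim M = \aleph_0 + |I'| = \alpha$, and that no nonzero element of $M$ has vanishing $V$-component, whence $M \setminus \{0\} \subset A$ by the dichotomy above.

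Finally, density would follow from the perturbation design: for each fixed $n$ we have $w_n + u_{n,k} \in M$ and $w_n + u_{n,k} \to w_n$ as $k \to \infty$, so $w_n \in \ovl{M}$; as $\ovl{M}$ is a closed subspace it contains $W_0 = \operatorname{span}\{w_n\}$, and hence $\ovl{M} \supseteq \ovl{W_0} = E$. Thus $M$ is a dense subspace of $E$ with $\dim M = \alpha$ and $M \setminus \{0\} \subset A$, which is the assertion. The main obstacle is exactly the tension between density and the condition $M \cap W_0 = \{0\}$: one cannot simply adjoin the dense space $W_0$, because its nonzero vectors lie in $B$ and not in $A$. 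The resolution is the scaling trick, which lets admissible vectors $w_n + u_{n,k} \in A$ converge to the forbidden directions $w_n$, while a reserved $\alpha$-sized block $\{v_i : i \in I'\}$ of basis vectors secures the full dimension.
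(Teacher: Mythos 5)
Your proof is correct and is essentially the same argument as the one behind the paper's statement, which is only cited there (\cite[Theorem 2.3]{BernalOrd}; see also \cite{bernal2008} and \cite{arongarciaperezseoane2009}): one perturbs a countable family spanning a dense subspace taken from $B$ by small nonzero multiples of linearly independent vectors of the $\al$-dimensional space inside $A$, uses $A+B \subset A$ together with $A \cap B = \varnothing$ (via uniqueness of components in the direct sum) to see that every nonzero element of the resulting span lies in $A$, and reserves an $\al$-sized block of independent vectors to secure $\dim M = \al$. Your double-indexed perturbations $u_{n,k} \to 0$ as $k \to \infty$, giving $w_n + u_{n,k} \to w_n$ and hence $\ovl{M} \supset W_0$, is precisely the device used there to reconcile density with the constraint $M \cap W_0 = \{0\}$.
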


The following elementary lemma will be used repeatedly along Sections 3--4.

\begin{lemma} \label{Peano extrema prescribed}
Let \,$Y$ be a Peano space and \,$[a,b]$ be a closed interval in $\R$. Given $u,v \in Y$, there is a
mapping \,$\Phi \in CS([a,b],Y)$ such that \,$\Phi (a) = u$ and \,$\Phi (b) = v$.
\end{lemma}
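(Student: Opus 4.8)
The plan is to reduce everything to a single continuous surjection of $I$ onto $Y$ supplied by the Hahn--Mazurkiewicz theorem, and then to repair the endpoint values by precomposing with a suitable self-map of $I$ that remains surjective. Since $Y$ is a Peano space, the theorem recalled in the Introduction guarantees the existence of some $h \in CS(I,Y)$, i.e.\ a continuous surjection $h: [0,1] \to Y$. Because $h$ is onto, I may pick $s,t \in [0,1]$ with $h(s) = u$ and $h(t) = v$; these are the targets to which the reparametrization must send the endpoints $0$ and $1$.

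Next I would construct a continuous surjection $\tilde h : [0,1] \to [0,1]$ with $\tilde h(0) = s$ and $\tilde h(1) = t$. A piecewise-linear ``zigzag'' does the job: let $\tilde h$ decrease affinely from $s$ to $0$ on $[0,1/3]$, increase affinely from $0$ to $1$ on $[1/3,2/3]$, and decrease affinely from $1$ to $t$ on $[2/3,1]$. This map is continuous, and its image contains both $0$ and $1$ and, being an interval, equals all of $[0,1]$, so $\tilde h$ is surjective; moreover it has the prescribed endpoint values. Setting $g := h \circ \tilde h$, I obtain $g \in C(I,Y)$ which is surjective (since $\tilde h([0,1]) = [0,1]$ forces $g([0,1]) = h([0,1]) = Y$) and satisfies $g(0) = h(s) = u$ and $g(1) = h(t) = v$.

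Finally, to pass from $I$ to the given interval $[a,b]$, I would precompose with the increasing affine bijection $\ell : [a,b] \to [0,1]$, $\ell(x) = (x-a)/(b-a)$, and put $\Phi := g \circ \ell$. As $\ell$ is a homeomorphism onto $[0,1]$, the map $\Phi$ is a continuous surjection, and $\Phi(a) = g(0) = u$ and $\Phi(b) = g(1) = v$; hence $\Phi \in CS([a,b],Y)$ with exactly the prescribed endpoint behaviour.

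The argument is entirely elementary, matching the lemma's billing, and I do not anticipate a genuine obstacle. The one point needing care is keeping surjectivity intact while forcing the endpoint values, which is precisely what the zigzag $\tilde h$ accomplishes. It is worth noting that this route sidesteps any appeal to path-connectedness of $Y$ (the natural tool if one tried instead to prepend and append arcs joining $u$ and $v$ to the ends of a filling curve): the freedom to choose the preimages $s,t$ together with the surjectivity of $\tilde h$ renders such considerations unnecessary.
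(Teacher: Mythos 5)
Your proof is correct, but it takes a genuinely different route from the paper's. The paper fixes a Peano curve $f \in {\mathcal P}_Y$ and repairs the endpoints in the \emph{target}: it invokes the arcwise connectedness of Peano spaces (citing Willard, Theorem 31.2) to produce continuous paths joining $u$ to $f(0)$ and $f(1)$ to $v$, concatenates these with $f$ over the three subintervals $[0,1/3]$, $[1/3,2/3]$, $[2/3,1]$, and then rescales to $[a,b]$. You instead repair the endpoints in the \emph{domain}: you choose preimages $s,t$ of $u,v$ under a single surjection supplied by Hahn--Mazurkiewicz and precompose with a piecewise-affine surjective zigzag $\tilde h : I \to I$ with $\tilde h(0)=s$ and $\tilde h(1)=t$. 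Your argument is slightly more economical and self-contained, since it uses only the bare existence of one continuous surjection $I \to Y$ and needs no separate appeal to arcwise connectedness of $Y$ (which, though automatic for a continuous image of $I$, the paper imports as a cited theorem). The paper's gluing-in-the-target technique is the one it reuses elsewhere (e.g.\ in showing ${\mathcal P}_Y$ is not equicontinuous and in the remark on sums of Peano curves), which presumably explains its choice of template; for this lemma in isolation your reparametrization is the cleaner of the two, and every step of it (continuity and surjectivity of $\tilde h$, surjectivity of the composition, the affine change of interval) checks out.
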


\begin{proof}
By the Hahn--Mazurkiewicz theorem, we can select a mapping $f \in {\mathcal P}_Y$.
Since Peano spaces are arcwise connected \cite[Theorem 31.2]{willard}, there are continuous mappings $g:[0,1/3] \to Y$
and $h:[2/3,1] \to Y$ satisfying $g(0) = u$, $g(1/3) = f(0)$, $h(2/3) = f(1)$ and $h(1) = v$.
Define $\varphi : I \to Y$ as
$$
\varphi (t) =  \left\{
\begin{array}{ll}
                 g(t)  & \mbox{if } 0 \le t < {1/3}  \\
                 f(3t-1) & \mbox{if } {1/3} \le t \le {2/3}  \\
                 h(t)  & \mbox{if }  {2/3}  \le t \le 1.
\end{array} \right.
$$
Then it is evident that the mapping $\Phi :[a,b] \to Y$ given by $\Phi (t) = \varphi \big({t-a \over b-a} \big)$
does the job.
\end{proof}

Finally, let us recall a number of concepts concerning the Jordan mea\-su\-ra\-bi\-li\-ty. Assume that $S$ is a bounded subset
of $\R^2$. Then the inner Jordan content and the outer Jordan content of $S$ are respectively given by the
following lower and upper Riemann integrals:
{$$
\underline{c}(S) = \underline{\int} \chi_S \,dxdy, \,\,\, \ovl{c}(S) = \ovl{\int} \chi_S \,dxdy,
$$}
where $\chi_S$ denotes the characteristic function of $S$. The set $S$ is said to be {\it Jordan measurable}
provided that $\underline{c}(S) = \ovl{c}(S)$, in which case their common value \,$c(S)$ \,is called the
{\it Jordan content} \,of $S$. This happens if and only if $\chi_S$ is Riemann integrable, and if and only if
$\lambda (\partial S) = 0$ ($\partial S$ denotes the boundary of $S$). Moreover, in this case, $S$ is Lebesgue
measurable and $c(S) = \la (S)$.

\section{The family of Peano curves}

\quad A natural, complete distance on the space $C(I,\R^2)$ is given by
\begin{equation}\label{Eq1}
\rho (f,g) = \sup_{t \in I} d_\infty (f(t),g(t)),
\end{equation}
that generates the topology of uniform convergence on $I$. Here $d_\infty$ is the metric on $\R^2$
resulting from the $1$-norm $\| \cdot \|_1$, that is, $d_\infty ((a,b),(c,d)) = \max \{|a-c|,|b-d|\}$ (other equivalent, even similar,
metrics are available on $\R^2$, but $d_\infty$ is more convenient for the sake of calculations).
Of course, $\mathcal P$ is a very small subset of $C(I,\R^2)$. The main reason for it is that $f(I) = I^2$
for each $f \in {\mathcal P}$. This is why it is more natural to consider \,$\mathcal P$ as a topological subspace
of $C(I,I^2)$ rather than of $C(I,\R^2)$. Observe that, due to the fact that uniform convergence entails pointwise convergence,
$C(I,I^2)$ (endowed with the distance $\rho$ induced from $C(I,\R^2)$) is closed in $C(I,\R^2)$
(in fact, $C(I,A)$ is closed in $C(I,\R^2)$, for every closed set $A \subset \R^2$), so it is a complete metric space.

\vskip .15cm

For a general Peano space \,$Y$, it will be endowed with a fixed distance \,$d$ \,generating
its topology (note that, as \,$Y$ is compact, any distance generating its topology is complete).
Then, just by changing $d_\infty$ to $d$, the expression \eqref{Eq1} above defines a complete distance on $C(I,Y)$.
Observe that, since \,$Y$ is metrizable and arcwise connected, it is uncountable as soon as it possesses more than one point;
in fact, every nonempty open subset of \,$Y$ is uncountable.
In the following theorem, we gather some topological or me\-tri\-cal properties of \,$\mathcal P$.
We use standard notation for a metric space $(X,D)$: $B_D(x_0,r)$ and $\ovl{B}_D(x_0,r)$
will stand, respectively, for the open ball and the closed ball with center $x_0 \in X$ and radius $r > 0$.

\begin{theorem}
Assume that \,$Y$ is a Peano space. We have:
\begin{enumerate}
\item[\rm (a)] ${\mathcal P}_Y$ is closed in \,$C(I,Y)$. In particular, ${\mathcal P}_Y$ is a completely metrizable space.
\item[\rm (b)] If \,$Y$ has at least two points then \,${\mathcal P}_Y$ is not compact.
\item[\rm (c)] Assume that \,$Y$ has at least two points and that there is \,$y_0 \in Y$ satisfying the following
               property: given a neighborhood \,$U$ of \,$y_0$, there exists a neighborhood \,$V$ of \,$y_0$ such that $V \subset U$
               and $V \setminus \{y_0\}$ is arcwise connected. Then \,${\mathcal P}^0_Y = \varnothing$. Hence \,${\mathcal P}_Y$ is nowhere dense in $C(I,Y)$.
\item[\rm (d)] In the case $Y = I^2$, the Peano family \,${\mathcal P}_Y = {\mathcal P}$ is strongly porous in $C(I,I^2)$.
\end{enumerate}
\end{theorem}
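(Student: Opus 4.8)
For part (a), the plan is to prove that ${\mathcal P}_Y$ is sequentially closed in $(C(I,Y),\rho)$ and then invoke the completeness of $C(I,Y)$. I would take $f_n \in {\mathcal P}_Y$ with $f_n \to f$ uniformly and check that the limit is still surjective: fixing $y \in Y$, surjectivity of $f_n$ yields $t_n \in I$ with $f_n(t_n)=y$, compactness of $I$ gives a subsequence $t_{n_k}\to t$, and then $d(f(t),y)\le d(f(t),f(t_{n_k}))+\rho(f,f_{n_k})$, where the first summand tends to $0$ by continuity of $f$ and the second by uniform convergence. Hence $f(t)=y$, so $f\in{\mathcal P}_Y$ and ${\mathcal P}_Y$ is closed; being a closed subset of the complete space $C(I,Y)$, it is itself completely metrizable.

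For part (b), the idea is to exhibit a sequence in ${\mathcal P}_Y$ no subsequence of which converges uniformly, which suffices since $C(I,Y)$ is metric. As $Y$ has two points $u\ne v$, put $\delta=d(u,v)>0$. Using Lemma \ref{Peano extrema prescribed} I would fix a surjection of $[0,1/2]$ onto $Y$ and, on $[1/2,1]$, concatenate $n$ arcs joining $u$ and $v$ alternately (arcwise connectedness of the Peano space $Y$), producing $\Phi_n\in{\mathcal P}_Y$ whose oscillation is at least $\delta$ on subintervals of length $O(1/n)$. The oscillation frequency tends to infinity along any subsequence, so no subsequence is equicontinuous; since every uniformly convergent sequence of continuous maps into a metric space is equicontinuous, no subsequence of $\{\Phi_n\}$ converges, and ${\mathcal P}_Y$ fails to be (sequentially) compact.

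For part (c), the plan is to show that every ball $B_\rho(f,\ve)$ about a surjection $f$ contains a non-surjective map, whence ${\mathcal P}_Y^0=\varnothing$ and nowhere density follows from (a). Applying the hypothesis at $y_0$ with $U=B_d(y_0,\ve/2)$ gives an open $V_1\ni y_0$ with $V_1\setminus\{y_0\}$ arcwise connected, and by regularity of $Y$ an open $V_2\ni y_0$ with $\ovl{V_2}\subset V_1$. Consider $O=f^{-1}(V_2)$, open in $I$; since the components of $O$ are pairwise disjoint open sets, the compact set $f^{-1}(y_0)\subset O$ meets only finitely many of them. On each such component $[a,b]$ the interior endpoint values satisfy $f(a),f(b)\in\partial V_2\subset V_1\setminus\{y_0\}$ (an endpoint of $I$ simply leaves one end free), so arcwise connectedness of $V_1\setminus\{y_0\}$ lets me replace $f$ there by an arc inside $V_1\setminus\{y_0\}$ with matching endpoints, keeping $g=f$ elsewhere. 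The map $g$ is then continuous, omits $y_0$, and satisfies $d(f(t),g(t))\le d(f(t),y_0)+d(y_0,g(t))<\ve$ on the (finitely many) modified components and $=0$ outside, so $\rho(f,g)<\ve$ while $g\notin{\mathcal P}_Y$.

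For part (d), the key step I would isolate is a surjectivity-forces-distance estimate: if $g(I)$ lies in a half-square $\{x\ge c\}$ of $I^2$, then every $h\in{\mathcal P}$, being onto $I^2$, meets the opposite edge $\{x=0\}$, forcing $\rho(g,h)\ge c$, so $\mathrm{dist}_\rho(g,{\mathcal P})\ge c$ (and symmetrically for the other three half-squares). Given $f=(f_1,f_2)\in C(I,I^2)$ and $\ve\in(0,1)$, I would fix $\delta\in(0,\min\{\ve,1/2\})$ and set $a=\min_t f_1(t)$; if $a\le 1/2$ define $g=(\max\{f_1,a+\delta\},f_2)$, so that $g(I)\subset\{x\ge a+\delta\}$ and $\rho(f,g)=\sup_t(a+\delta-f_1(t))^+=\delta$, while if $a>1/2$ I push the first coordinate down to $(\max_t f_1)-\delta$ by the symmetric recipe. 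In either case $\rho(f,g)=\delta<\ve$ and the estimate gives $\mathrm{dist}_\rho(g,{\mathcal P})\ge\delta=\rho(f,g)$, so $B_\rho(g,\al\,\rho(f,g))\cap{\mathcal P}=\varnothing$ for every $\al\in(0,1)$; as $f,\ve$ were arbitrary and $\al$ can be taken arbitrarily close to $1$, this yields strong porosity of ${\mathcal P}$. The main obstacle is the rerouting in (c): one must simultaneously guarantee continuity of the perturbation (which is exactly why only finitely many offending components and the local arcwise-connectedness of punctured neighborhoods are invoked) and genuine non-surjectivity while staying within $\ve$.
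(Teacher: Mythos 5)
Your proposal is correct throughout, and in two of the four parts it diverges from the paper's route in ways worth noting. Parts (a) and (b) are essentially the paper's arguments: (a) is identical, and for (b) the paper likewise disproves equicontinuity of ${\mathcal P}_Y$ (via a single surjection oscillating between two fixed points on an arbitrarily short interval, followed by the generalized Arzel\'a theorem), whereas you build an explicit sequence with unboundedly fast oscillation and use the elementary fact that a uniformly convergent sequence of continuous maps on a compact domain is equicontinuous --- the same idea, marginally more self-contained. In (c) the paper reroutes $f$ on \emph{all} the (possibly countably many) components of $f^{-1}(B_d(y_0,s))$ and declares the resulting map ``evidently'' continuous; your variant, which interposes $V_2$ with $\ovl{V_2}\subset V_1$ and uses compactness of $f^{-1}(y_0)$ to reduce to finitely many offending components, makes continuity of the perturbation automatic while still guaranteeing that $y_0$ is omitted, so it is a technically cleaner implementation of the same replacement idea. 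In (d) the constructions genuinely differ: the paper contracts $f$ radially to $f_0=(1-\frac{\ve}{2})f$, so that every map near $f_0$ has sup-norm below $1$ and hence misses the corner of $I^2$, whereas you truncate one coordinate away from an extreme edge and use the observation that any surjection onto $I^2$ must reach the opposite edge, giving $\mathrm{dist}_\rho(g,{\mathcal P})\ge\delta=\rho(f,g)$. Your version has the advantage that the displacement $\rho(f,g)=\delta$ is strictly positive and under your control, whereas the paper's displacement $\rho(f,f_0)=\frac{\ve}{2}\sup_{t\in I}\|f(t)\|_1$ degenerates when $f$ has small sup-norm (e.g.\ $f\equiv(0,0)$), a marginal case the paper's argument passes over; your case split on whether $\min_t f_1\le 1/2$ is exactly what keeps the perturbed map inside $C(I,I^2)$. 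All steps check out.
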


\begin{proof}
(a) Let $F \in C(I,Y)$ and $\{f_n\}_{n \ge 1}$ be a sequence in \,${\mathcal P}_Y$ with $f_n \to F$.
Fix $y \in Y$. Then there is a sequence $\{t_n\}_{n \ge 1} \subset I$ such that $f_n(t_n) = y$ for all $n \in \N$.
Since $I$ is compact, we can take out a subsequence $\{t_{n_k}\}_{k \ge 1}$ converging to some point $t_0 \in I$.
The continuity of $F$ yields $\alpha_k := d(F(t_{n_k}),F(t_0)) \to 0$ as $k \to \infty$.
From the triangle inequality,
$$
d(y,F(t_0)) \le  d(f_{n_k}(t_{n_k}),F(t_{n_k})) + d(F(t_{n_k}),F(t_0)) \le \rho (f_{n_k},F) + \al_k \longrightarrow 0.
$$
Hence \,$d(y,F(t_0)) = 0$ \,or, that is the same, $F(t_0) = y$. Since \,$y$ \,was arbitrary, $F$ is surjective, that is, $F \in {\mathcal P}_Y$.
Therefore \,${\mathcal P}_Y$ is closed.

\vskip .15cm

\noindent (b) Choose $y,z \in Y$ with $y \ne z$. Set $\ve := d(y,z) > 0$ and fix $\delta \in (0,1)$.
By Lemma \ref{Peano extrema prescribed}, there exists a continuous surjective function $\Phi : [0,\delta /2] \to Y$
with $\Phi (0) = y = \Phi (\delta /2)$. In particular, there is $v \in [0,\delta /2]$ such that $\Phi (v) = z$.
By extending \,$\Phi$ \,as \,$y$ \,on $(\delta /2,1]$ and setting $u := 0$, we obtain points $u,v \in I$ and a mapping
$\Phi \in {\mathcal P}_Y$ such that $|u-v| < \delta$ but $d(\Phi (u),\Phi (v)) \ge \ve$. In other words,
the family \,${\mathcal P}_Y$ is not equicontinuous. According to the generalized
Arzel\'a theorem (see e.g.~\cite[pp.~119--120]{kolmogorovfomin}),
${\mathcal P}_Y$ cannot be relatively compact, so it is not compact.

\vskip .15cm

\noindent (c) Consider the point $y_0$ given in the hypothesis and suppose, by way of contradiction,
that \,${\mathcal P}^0_Y \ne \varnothing$. Then there are $f \in {\mathcal P}_Y$ and $r > 0$ such that
$B_\rho (f,r) \subset {\mathcal P}_Y$. In other words, if $g \in C(I,Y)$ and $\rho (g,f) < r$ then $g(I) = Y$.
On one hand, a neighborhood \,$V$ of $y_0$ can be found such that \,$B_d(y_0,r/2) \supset V$ and
\,$V \setminus \{y_0\}$ is arcwise connected. On the other hand, there is a closed ball \,$\ovl{B}_d(y_0,s) \subset V$.
Since $f$ is continuous, the set \,$f^{-1}(B_d(y_0,s))$ is open in $I$, so it is a countable union of pairwise disjoint intervals of
the form $(\al ,\beta )$, $[0,\beta )$ or $(\al ,1]$. In all three cases,
we have $f(\al ) \ne y_0 \ne f(\beta )$, and the continuity of $f$ implies
\,$f(\al ),f(\beta ) \in \ovl{B}_d(y_0,s)$. Then \,$f(\al ),f(\beta ) \in V \setminus \{y_0\}$, which is
arcwise connected. Therefore, in the first case, we can find a continuous mapping
\,$h = h_{\al ,\beta} :[\alpha ,\beta ] \to V \setminus \{y_0\}$ \,satisfying \,$h(\al ) = f(\al )$ \,and
\,$h(\beta ) = f(\beta )$.

\vskip .15cm

Define the mapping \,$g:I \to Y$ \,as follows: $g(t) = f(t)$ \,if \,$t \in I$ \break $\setminus \,f^{-1}(B_d(y_0,s))$,
$g(t) = h_{\al ,\beta} (t)$ if \,$t$ \,belongs to one of the intervals $(\al ,\beta )$ making up $f^{-1}(B_d(y_0,s))$,
$g(t) = f(\beta )$ if \,$t \in [0,\beta ) \subset f^{-1}(B_d(y_0,s))$, and \,$g(t) = f(\al )$ \,if \,$t \in (\alpha,1] \subset f^{-1}(B_d(y_0,s))$.
It is evident that \,$g$ \, is continuous and \,$g(t) \ne y_0$ \,for all $t \in I$. Then \,$g \not\in {\mathcal P}_Y$.
Now, the triangle inequality and the fact $s < r/2$ yield \,$d(g(t),f(t)) < r$ \,for all $t \in I$, so \,$g \in B_\rho (f,r)$.
This contradiction proves (c).

\vskip .15cm

\noindent (d) Fix $\al \in (0,1)$ and a ball $B_\rho (f,\ve ) \subset C(I,I^2)$. Define $f_0 := (1 - {\ve \over 2})f$.
Trivially, $f_0 \in C(I,I^2)$. Moreover,
$$
\rho (f,f_0) = \sup_{t \in I} \|f(t) - (1 - {\ve \over 2})f(t)\|_1 =
{\ve \over 2} \sup_{t \in I} \|f(t)\|_ 1 \le {\ve \over 2} < \ve ,
$$
so $f_0 \in B_\rho (f,\ve )$.
Take $g \in B_\rho (f_0,\al \rho (f,f_0))$. Then $d(g(t),f_0(t)) \le \al \rho (f,f_0) \le \al \ve /2$ for
all $t \in I$ and, by the triangle inequality,
$$
\|g(t)\|_1 \le \al \rho (f,f_0) + \|f_0(t)\|_1 \le
\al {\ve \over 2} +  1 - {\ve \over 2} < 1.
$$
Therefore $g(I) \ne I^2$, so
\,${\mathcal P} \cap B_\rho (f_0,\al \rho (f,f_0)) = \varnothing$. This had to be shown.
\end{proof}

\begin{remark}
{\rm Of course, the condition in (c) above is fulfilled if \,$Y = I^2$, but in this case the conclusion of (d) is stronger
than that of (c).
Notice that some assumption on \,$Y$ is really needed in order that \,${\mathcal P}_Y^0 = \varnothing$.
For instance, for the unit circle \,$Y = S^1 = \{(x,y) \in \R^2: \, x^2 + y^2\}$, which clearly does not satisfy
the mentioned condition, we have that  \,${\mathcal P}_{S^1}^0 \ne \varnothing$. Indeed, it is not difficult to show that
for the mapping \,$f:t \in I \mapsto (\cos (4 \pi t),\sin (4 \pi t)) \in S^1$ \,(which travels $S^1$ twice in the same
direction) one has \,$B_\rho (f,1/2) \subset {\mathcal P}_{S^1}$.}
\end{remark}

\begin{remark}
{\rm The last theorem yields that, topologically speaking, the Peano class is very small.
Another property of ${\mathcal P}_Y$ --easy to see and not related to the size-- is that it is {\it arcwise connected.}}
\end{remark}

The next statement tells us that, if we endow $C(I,I^2)$ with the semigroup structure given by coordinatewise multiplication,
then \,$\mathcal P$ has a chance to be considered large.

\begin{theorem} \label{Thm semigroupable}
The set \,${\mathcal P}$ is semigroupable.
\end{theorem}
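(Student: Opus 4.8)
The plan is to exhibit an explicit infinitely generated subsemigroup of $\mathcal P$ for the coordinatewise product $*$, whose identity element is the constant map $\mathbf 1 := (1,1)$. First I would fix a sequence of non-overlapping closed subintervals of $I$, say $I_n = [1 - \tfrac1n,\, 1 - \tfrac1{n+1}]$ $(n \ge 1)$, which tile $[0,1)$ and accumulate at $1$. On each $I_n$, Lemma \ref{Peano extrema prescribed} (applied with $Y = I^2$ and prescribed endpoint values $u = v = \mathbf 1$) furnishes a surjection $\Phi_n \in CS(I_n, I^2)$ equal to $\mathbf 1$ at both endpoints of $I_n$. I then define the generator $h_n$ to be $\Phi_n$ on $I_n$ and $\mathbf 1$ on $I \setminus I_n$. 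The endpoint matching makes each $h_n$ continuous (including at the accumulation point $1$, where $h_n \equiv \mathbf 1$ nearby), and since $\Phi_n$ is already onto $I^2$, each $h_n$ lies in $\mathcal P$.

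The crucial point — and the one I expect to be the main obstacle — is that a coordinatewise product of surjections onto $I^2$ need not be surjective, so the generators must be engineered so that products remain onto; this is exactly what the choice ``$\mathbf 1$ off $I_n$'' achieves. I would consider an arbitrary element of the generated semigroup $G$; by commutativity it can be written as $x = h_{n_1}^{m_1} * \cdots * h_{n_k}^{m_k}$ with distinct indices $n_1 < \cdots < n_k$ and $m_j \in \N$. On the interval $I_{n_1}$ every factor other than $h_{n_1}$ is identically $\mathbf 1$, so $x$ restricts there to the coordinatewise power $\Phi_{n_1}^{m_1} = (\Phi_{n_1,1}^{m_1}, \Phi_{n_1,2}^{m_1})$. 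A power of a surjection onto $I^2$ is again a surjection: given $(a,b) \in I^2$, choosing $t \in I_{n_1}$ with $\Phi_{n_1}(t) = (a^{1/m_1}, b^{1/m_1})$ yields $\Phi_{n_1}^{m_1}(t) = (a,b)$. Hence $x(I) \supseteq x(I_{n_1}) = I^2$, and $x$, being continuous, belongs to $\mathcal P$. Thus $G \subset \mathcal P$.

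Finally I would prove that $G$ is infinitely generated. For each $n$ pick $c_n$ in the interior of $I_n$ with $\Phi_n(c_n) = (0,0)$ (possible since $\Phi_n$ is onto), so that $h_n(c_n) = (0,0)$ while $h_m(c_n) = \mathbf 1$ for $m \ne n$. I then associate to any $f = (f_1, f_2) \in C(I, I^2)$ the set $Z(f) := \{n \in \N : f_1(c_n) = 0\}$. Because a coordinate of a product vanishes precisely when a coordinate of some factor does, one gets $Z(f * g) = Z(f) \cup Z(g)$ and $Z(f^m) = Z(f)$, while $Z(h_n) = \{n\}$. Suppose, toward a contradiction, that $G$ were generated by a finite set $F = \{g_1, \dots, g_p\} \subset C(I,I^2)$. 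Expressing each $h_n$ as a product of elements of $F$ and applying $Z$ gives $\{n\} = Z(h_n) = \bigcup_{i \in \Lambda_n} Z(g_i)$ for some $\Lambda_n \subset \{1, \dots, p\}$; since every $Z(g_i)$ appearing is then contained in $\{n\}$, at least one of them equals $\{n\}$. As a single function $g_i$ can satisfy $Z(g_i) = \{n\}$ for at most one value of $n$, distinct indices force distinct elements of $F$, contradicting the finiteness of $F$. Therefore $G$ is infinitely generated, and $\mathcal P$ is semigroupable. The only delicate verifications are the surjectivity of products (handled by the identity trick above) and the homomorphism property of $Z$, both of which are routine once the generators are set up as described.
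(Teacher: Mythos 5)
Your proposal is correct, and the construction is essentially the one in the paper: the same subdivision of $[0,1)$ into consecutive intervals accumulating at $1$, the same use of Lemma \ref{Peano extrema prescribed} to build generators equal to a Peano surjection on one interval and to $(1,1)$ elsewhere, and the same observation that on the interval attached to any index occurring in a product all other factors are identically $(1,1)$, so the product restricts to a coordinatewise power of a surjection onto $I^2$ (your explicit $m$-th root verification of that last point is a detail the paper leaves implicit). The only place you genuinely diverge is the proof that $G$ is not finitely generated: the paper argues that a finite generating subset \emph{of $G$} could involve only $f_1,\dots,f_p$ and that $f_{p+1}$ is not a product of these, whereas your invariant $Z(f)=\{n: f_1(c_n)=0\}$, with $Z(f*g)=Z(f)\cup Z(g)$ and $Z(h_n)=\{n\}$, rules out \emph{any} finite generating set $F\subset C(I,I^2)$, which matches the definition of ``infinitely generated'' as literally stated (with $F\subset X$ rather than $F\subset G$) and is therefore marginally more robust. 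Both arguments are sound; yours costs a small extra bookkeeping step (choosing the points $c_n$, which necessarily lie in the interior of $I_n$ since the endpoints map to $(1,1)$) and buys a cleaner, generator-independent obstruction.
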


\begin{proof}
Fix any sequence $(a_n)$ with $a_1 < a_2 < \cdots < a_n < \cdots \to 1$. Let $a_0 := 0$. According to Lemma \ref{Peano extrema prescribed}, we can find for every $n \in \N$ a mapping $f_n \in CS([a_{n-1},a_n],I^2)$ with $f_n(a_{n-1}) = (1,1) = f_n(a_n)$. Let us extend continuously $f_n$ to $I$ by defining $f_n(t) = (1,1)$ if $t \in I \setminus [a_{n-1},a_n]$. Then $f_n \in {\mathcal P}$ and, trivially, every power $f_n^m$ still belongs to ${\mathcal P}$. Consider the subsemigroup \,$G$ \,generated by $\{f_n\}_{n \ge 1}$. Given $\Phi \in G$, there exist $p \in \N$, $\{i_1 < \cdots < i_p\} \subset \N$ and $\{m_1, \dots ,m_p\} \subset \N$ satisfying \,$\Phi = f_{i_1}^{m_1} \cdots f_{i_p}^{m_p}$. Since
$$
I^2 \supset \Phi (I) \supset \Phi ([a_{i_p - 1},a_{i_p}]) = f_{i_p}^m ([a_{i_p - 1},a_{i_p}]) = I^2,
$$
we obtain \,$\Phi (I) = I^2$ or, that is the same, $G \subset {\mathcal P}$.
All that must be proved is that $G$ is infinitely generated. Assume, by way of contradiction, that there are finitely many elements of $G$ generating it. Taking into account the structure of $G$ and the fact that $G$ is commutative, there would be $p \in \N$ such that each $\Phi \in G$ can be written as $\Phi = f_1^{m_1} \cdots f_p^{m_p}$, for some $m_1, \dots ,m_p \in \{0,1,2, \dots\}$ depending on $\Phi$. But taking $\Phi = f_{p+1}$, the previous equality is not possible, because $f_j(t) = (1,1)$ for all $t \in [a_p,a_{p+1}]$ and all $j=1, \dots, p$. This is the desired contradiction.
\end{proof}

\begin{remark}
{\rm If \,$\mathcal P$ \,is considered as a subset of the additive {group\linebreak}
$(C(I,\R^2),+)$, then it is not very likely for
the sum of two given mappings in \,$\mathcal P$ to stay still in \,$\mathcal P$. Nevertheless, we can say at least the following:
given $N \in \N$, there are functions $f_1, \dots ,f_N \in {\mathcal P}$ such that $f_1 + \cdots + f_N \in {\mathcal P}$.
Indeed, for each $i \in \{1,...,N\}$ take as $f_i$ the mapping $\Phi$ provided in Lemma \ref{Peano extrema prescribed}
with $Y = I^2$, $[a,b] = [{i-1 \over N},{i \over N}]$, $u = (0,0) = v$, extended as $(0,0)$
to the remaining of $I$. But one cannot find a sequence
$\{f_n\}_{n=1}^\infty \subset {\mathcal P}$ such that $\sum_{n \ge 1} f_n$ converges uniformly to any function because, if this is were the case,
one would have $\lim_{n \to \infty} \sup_{t \in I} \|f_n(t)\|_1 = 0$, which is plainly not possible since
$\sup_{t \in I} \|f_n(t)\|_1 = 1$ for every $n$. We do not know whether there is a sequence
$\{f_n\}_{n=1}^\infty \subset {\mathcal P}$ such that $\sum_{n \ge 1} f_n$ converges {\it pointwise} to a function $f \in {\mathcal P}$.}
\end{remark}

\section{The family of space-filling curves}

\quad Throughout this section we shall deal with the algebraic size of the set $\mathcal{SF}$,
viewed as a subset of $C(I,\R^2)$. Recall that, under the distance given by \eqref{Eq1}, $C(I,\R^2)$ is an F-space, that is,
a completely metrizable topological vector space. In fact, it is a Banach space under the norm $\| \cdot \| := \rho (\cdot ,0)$.

\vskip .15cm

Concerning elementary topological properties, the set $\mathcal{SF}$ is clearly {\it non-closed} in $C(I,\R^2)$:
if we take $f \in {\mathcal P}$ then each $f_n := (1/n)f \in \mathcal{SF}$ $(n \ge 1)$ and ${f_n \to (0,0)} \not\in \mathcal{SF}$.
Moreover, $\mathcal{SF}^0 = \varnothing$. Indeed,
if $\varphi \in C(I,\R^2)$ and $\ve > 0$ are given, with $\varphi (t) = (g(t),h(t))$, then
from the uniform continuity of $g$ and $h$ one obtains an $N \in \N$ such that $|g(t)-g(u)| < \ve /2$ and $|h(t) - h(u)| < \ve /2$
whenever $t,u \in [{i-1 \over N},{i \over N}]$ $(i=1, \dots ,N)$. If we define $\widetilde{g}, \widetilde{h} : I \to \R$
as the polygonal functions joining successively the points $({i \over N},g({i \over N}))$ $(i=1,...,N)$ and, respectively, the points
$({i \over N},h({i \over N}))$ $(i=1,...,N)$, then the mapping $\widetilde{\varphi} (t) := (\widetilde{g}(t), \widetilde{h}(t))$
satisfies \,$\rho (\widetilde{\varphi}, \varphi ) < \ve$ \,and \,$\widetilde{\varphi} \not\in \mathcal{SF}$, so $\mathcal{SF}$
does not contain any $\rho$-ball.

\vskip .15cm

If $\varphi \in C(I,\R^2)$ then $\varphi (I)$ is compact, hence bounded and closed.
Then $\varphi (I) = \varphi (I)^0 \cup \partial \varphi (I)$. Therefore,
according to Definition \ref{def spacefilling curve} and the final paragraph of Section 2, we have that

\vskip .15cm

\centerline{\it $\varphi \in \mathcal{SF}$ \ if and only if \ $\la (\partial \varphi (I)) = 0$ and $(\varphi (I))^0 \ne \varnothing$.}

\vskip .15cm

We saw in Section 1 how Osgood's example provided a $\la$-space-filling curve \,$\psi$ \,that is {\it not} space-filling.
In this case, we have even that $(\varphi (I))^0 = \varnothing$; indeed, a continuous injective mapping $I \to \R^2$ cannot
fill in a square, see \cite{sagan}. In view of this, the following concept is in order.

\begin{definition}
{\rm A continuous mapping $\varphi : I \to \R^2$ is said to be a
{\it topologically space-filling curve} provided that \,$(\varphi (I))^0 \ne \varnothing$.
The family of all these mappings will be denoted by $\mathcal{TSF}$.}
\end{definition}

It is evident that $\mathcal{SF} \subset \mathcal{TSF} \subset \la$-$\mathcal{SF} := \{\la$-space-filling curves$\}$.
Moreover, both inclusions are strict. Indeed, for the second one we can appeal Osgood's example, while for the first one we
can construct on $[0,1/3]$ a curve filling $I^2$, and on $[2/3,1]$ an Osgood-type curve that is disjoint with $I^2$, and then to joint them
along $[1/3,2/3]$ by a segment so as to built a $\mathcal{TSF}$ mapping.

\vskip .15cm

In the following theorems it is shown that, in some algebraic senses,
our family \,$\mathcal{SF}$ \,can be thought as ``large''.

\begin{theorem} \label{Thm SF is spaceable}
The family \,$\mathcal{SF}$ is spaceable in $C(I,\R^2)$. In particular, it is maximal lineable.
\end{theorem}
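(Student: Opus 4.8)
The plan is to build a closed infinite-dimensional subspace $M$ of $C(I,\R^2)$ all of whose nonzero elements are space-filling curves, and to do so by exploiting the disjoint-support idea behind Lemma \ref{Lemma-disjointsupports}. First I would fix a strictly increasing sequence $a_0 = 0 < a_1 < a_2 < \cdots \to 1$ and, for each $n \ge 1$, use Lemma \ref{Peano extrema prescribed} (with $Y = I^2$) to produce a mapping $g_n \in CS([a_{n-1},a_n],I^2)$ starting and ending at the corner $(0,0)$; extending $g_n$ by $(0,0)$ on the rest of $I$ yields a sequence $\{g_n\}_{n \ge 1} \subset C(I,\R^2) \setminus \{0\}$ whose supports lie in the pairwise-almost-disjoint intervals $[a_{n-1},a_n]$. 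By Lemma \ref{Lemma-disjointsupports}, $\{g_n\}$ is a basic sequence, so $M := \ovl{\rm span}\{g_n\}_{n \ge 1}$ is a closed infinite-dimensional subspace of $C(I,\R^2)$.

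The heart of the argument is to verify that every $\Phi \in M \setminus \{0\}$ lies in $\mathcal{SF}$. I would first treat a finite linear combination $\Phi = \sum_{n} \lambda_n g_n$ with some $\lambda_{n_0} \ne 0$. On the interval $[a_{n_0 - 1},a_{n_0}]$ all the other basis elements vanish, so $\Phi$ restricted to that interval equals $\lambda_{n_0} g_{n_0}$, whose image is the scaled square $\lambda_{n_0} I^2$. Hence $\Phi(I) \supset \lambda_{n_0} I^2$, which has nonempty interior, giving $(\Phi(I))^0 \ne \varnothing$. For Jordan measurability I would use the characterization $\la(\partial \Phi(I)) = 0$ recorded just before Definition \ref{def spacefilling curve}: since $\Phi(I)$ is a finite union of scaled squares (one per nonzero $\lambda_n$), its boundary is contained in a finite union of square boundaries, hence is Lebesgue-null, so $\Phi(I)$ is Jordan measurable with positive content, i.e.~$\Phi \in \mathcal{SF}$.

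The delicate point, and the step I expect to be the main obstacle, is passing from finite combinations to arbitrary elements of the closed span $M$, where $\Phi = \sum_{n \ge 1} \lambda_n g_n$ is a genuine norm-convergent series. Here I cannot simply say $\Phi(I)$ is a finite union of squares. Instead I would isolate the first index $n_0$ with $\lambda_{n_0} \ne 0$; because the supports are disjoint and $g_{n_0}$ is supported in $[a_{n_0-1},a_{n_0}]$ where every other $g_n$ vanishes, the restriction $\Phi|_{[a_{n_0-1},a_{n_0}]}$ is still exactly $\lambda_{n_0} g_{n_0}$, so $\Phi(I) \supset \lambda_{n_0} I^2$ and $(\Phi(I))^0 \ne \varnothing$ survives the limit. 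The real work is the Jordan measurability: I would show $\la(\partial\Phi(I)) = 0$ by writing $\Phi(I) = \bigcup_n \lambda_n g_n([a_{n-1},a_n])$, a countable union of scaled (possibly degenerate) squares accumulating only near the single point $\Phi(1) = (0,0)$ as $a_n \to 1$. Since $\sup_t\|g_n(t)\|_1 = 1$ forces $\lambda_n \to 0$, the tail squares shrink, and I expect $\partial\Phi(I)$ to be contained in the countable union of the individual square boundaries together with the accumulation point, hence null; controlling this boundary rigorously is where care is needed. Once spaceability is established, maximal lineability is immediate, since $\dim C(I,\R^2) = \mathfrak c$ and the separable infinite-dimensional closed subspace $M$ can be enlarged (or one simply notes a $\mathfrak c$-dimensional subspace sits inside $\mathcal{SF}$ by the same disjoint-support construction with a continuum-sized index set).
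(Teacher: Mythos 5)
Your proposal follows essentially the same route as the paper: disjoint supports, Lemma \ref{Lemma-disjointsupports} to get a basic sequence, and the closed span as the witnessing subspace. The one place where the paper is cleaner concerns exactly the step you flag as delicate: instead of taking $g_n \in CS([a_{n-1},a_n],I^2)$, the paper takes $f_n \in CS([a_{n-1},a_n],[-1,1]^2)$ with endpoints at $(0,0)$. Then for $f=\sum c_nf_n$ each piece has image $c_n[-1,1]^2=[-|c_n|,|c_n|]^2$, and since $\|f_n\|=1$ forces $c_n\to 0$, some $|c_p|$ is maximal and the whole image collapses to the \emph{single} square $[-|c_p|,|c_p|]^2$ --- Jordan measurability is then immediate and no boundary estimate for a countable union is needed. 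Your version with $I^2$ can still be completed (the positively scaled squares $[0,\lambda_n]^2$ are nested, as are the negatively scaled ones, so $\Phi(I)$ is the union of at most two squares and the origin; alternatively, since $\Phi(I)$ is compact one has $\partial\Phi(I)\subset\bigcup_n\partial(\lambda_n I^2)$, a null set), but you should carry this out rather than leave it as an expectation. One further correction: your parenthetical suggestion of running the disjoint-support construction over ``a continuum-sized index set'' cannot work, since $I$ admits only countably many pairwise disjoint nondegenerate intervals; the correct (and the paper's) reason for maximal lineability is simply that $M$, being a separable infinite-dimensional closed subspace of a Banach space, already has dimension $\mathfrak{c}=\dim C(I,\R^2)$ by Baire's theorem.
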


\begin{proof}
Fix again any sequence $(a_n)$ with $a_1 < a_2 < \cdots < a_n < \cdots \to 1$.
By Lemma \ref{Peano extrema prescribed}, for every $n \in {\N}$ there is a mapping $f_n \in CS([a_{n-1},a_{n}],[-1,1]^2)$ with $f_n(a_{n-1}) = (0,0) = f_n(a_n)$, where $a_0 := 0$. Extend continuously each $f_n$ to $I$ by
setting $f_n (t) = (0,0)$ if $t \in I \setminus [a_{n-1},a_n]$. Since the supports of these functions are mutually disjoint,
Lemma \ref{Lemma-disjointsupports} tells us that $\{f_n\}_{n \ge 1}$ is a basic sequence of $C(I,\R^2)$. Define
$$
M := \ovl{\rm span} \{f_n: \, n \in \N\}.
$$
It is plain that $M$ is a closed vector subspace of $C(I,\R^2)$. Moreover, it is infinite dimensional because
the $f_n$'s, being members of a basic sequence, are linearly independent.

\vskip .1cm

Finally, let $f \in M \setminus \{0\}$.
Then there is a sequence $(c_n) \subset \R$ with some ${c_m} \ne 0$ such that $f = \sum_{n=1}^\infty c_nf_n$ in $C(I,\R^2)$.
Note that this series converges uniformly on $I$. Therefore $c_nf_n \to 0$ uniformly on $I$, that is,
$\lim_{n \to \infty} |c_n| \sup_{t \in I} \|f_n(t)\|_1 = 0$. But since $f_n(I) = [-1,1]^2$, we get $\sup_{t \in I} \|f_n(t)\|_1 = 1$
for all $n$, hence ${c_n} \to 0$. Therefore, there exists $p \in \N$ such that $|c_p| = \max \{|c_n|: \, n \in \N\} > 0$.
Consequently, $f(I) = \{(0,0)\} \, \cup \,\bigcup_{n \ge 1} (c_nf_n)([a_{n-1},a_{n}]) = \{(0,0)\} \, \cup \, \bigcup_{n \ge 1} |c_n|[-1,1]^2 =
|c_p|[-1,1]^2 =$ \break
$[-|c_p|,|c_p|]^2$. Then $f(I)$ is, trivially, Jordan measurable and satisfies $f(I)^0 \ne \varnothing$.
Thus, $f \in \mathcal{SF}$, as required. The maximal lineability of $\mathcal{SF}$ comes from the fact that dim$(M) = {\mathfrak c}$ ($= {\rm dim}\, (C(I,\R^2)$), because, by Baire's ca\-te\-go\-ry theorem, the dimension of any separable infinite dimensional F-space is ${\mathfrak c}$.
\end{proof}

\begin{proposition}
The family \,$\mathcal{SF}$ is dense in $C(I,\R^2)$.
\end{proposition}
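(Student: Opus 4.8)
The plan is to show that every $\varphi \in C(I,\R^2)$ can be uniformly approximated, within any prescribed $\ve > 0$, by a member of $\mathcal{SF}$. The natural idea is to glue a small genuine Peano curve, filling a tiny square, into $\varphi$ along a short subinterval. However, I must first arrange that the \emph{unmodified} part of the image carries content zero; otherwise the resulting image need not be Jordan measurable, since $\varphi(I)$ itself may be an Osgood-type set with $\la(\partial \varphi(I)) > 0$. This is precisely the delicate point of the argument.

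First I would replace $\varphi$ by a polygonal approximation. The construction already carried out above to prove $\mathcal{SF}^0 = \varnothing$ shows that the polygonal maps are dense in $C(I,\R^2)$; hence there is a polygonal $\widetilde\varphi$ with $\rho(\widetilde\varphi,\varphi) < \ve/2$, and crucially $\widetilde\varphi(I)$ is a finite union of segments, so it is closed, has empty interior, and satisfies $\la(\widetilde\varphi(I)) = 0$.

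Next, fix $\delta \in (0,\ve/2)$ and, using the uniform continuity of $\widetilde\varphi$, choose a subinterval $[a,b] \subset I$ so short that $\|\widetilde\varphi(t) - \widetilde\varphi(a)\|_1 < \delta/2$ for all $t \in [a,b]$. Let $Q$ be the closed square of side $\delta$ centred at $\widetilde\varphi(a)$; then $Q$ is a Peano space containing both $\widetilde\varphi(a)$ and $\widetilde\varphi(b)$. By Lemma \ref{Peano extrema prescribed}, applied with $Y = Q$, the interval $[a,b]$, and $u = \widetilde\varphi(a)$, $v = \widetilde\varphi(b)$, there is a continuous surjection $\Phi : [a,b] \to Q$ with $\Phi(a) = \widetilde\varphi(a)$ and $\Phi(b) = \widetilde\varphi(b)$. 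I then define $g := \widetilde\varphi$ on $I \setminus [a,b]$ and $g := \Phi$ on $[a,b]$; the matching endpoint values make $g$ continuous. Since $g(t) \in Q$ for $t \in [a,b]$, the triangle inequality gives $\|g(t) - \widetilde\varphi(t)\|_1 < \delta$ there, while $g = \widetilde\varphi$ elsewhere; hence $\rho(g,\widetilde\varphi) \le \delta < \ve/2$, and therefore $\rho(g,\varphi) < \ve$.

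Finally I would verify $g \in \mathcal{SF}$. Its image is $g(I) = S \cup Q$, where $S := \widetilde\varphi([0,a]) \cup \widetilde\varphi([b,1])$ is a finite union of segments with $\la(S) = 0$ and empty interior, so $\partial S = S$. Using $\partial(S \cup Q) \subseteq \partial S \cup \partial Q$ together with $\la(S) = 0 = \la(\partial Q)$ yields $\la(\partial g(I)) = 0$, so $g(I)$ is Jordan measurable; and $(g(I))^0 \supseteq Q^0 \ne \varnothing$ forces $c(g(I)) > 0$. Thus $g \in \mathcal{SF}$ by Definition \ref{def spacefilling curve}. The only genuinely nontrivial point is this Jordan-measurability check, which is exactly what dictates the preliminary passage to a polygonal curve; the remaining steps are routine gluing and estimation.
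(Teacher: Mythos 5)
Your proof is correct and follows essentially the same route as the paper's: both replace the given curve by one whose image is a polygonal arc (hence of measure zero) together with a small square filled by a Peano curve on one subinterval, so that the boundary of the image is Lebesgue-null and the interior is nonempty. The only difference is organizational — you first polygonalize and then splice in the Peano square, whereas the paper builds the Peano piece on $[t_0,t_1]$ and the affine-linear pieces on the remaining subintervals in a single step — and you correctly isolate the Jordan-measurability of the image as the point that forces the polygonal reduction.
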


\begin{proof}
Fix a ball $B_\rho (f, \ve )$. Since $f$ is uniformly continuous on $I$, there is $\delta > 0$ such that
$\|f(u) - f(v)\|_1 < \ve /2$ if $|u-v| < \delta$. Select a partition $\{0 = t_0 < t_1 < \cdots < t_N = 1\}$ with
$|t_j - t_{j-1}| < \delta$ $(j=1, \dots ,N)$. Then we have $\|f(t_j) - f(t_{j-1})\|_1 < \ve /2$ $(j=1, \dots ,N)$.
Choose any closed non-degenerate rectangle $R = [a,b] \times [c,d]$ with $\max \{b-a,d-c\} < \ve /2$ and
$\{f(t_0),f(t_1)\} \subset R$. Select also any mapping $\varphi \in CS([t_0,t_1],R)$ such that $\varphi (t_1) = f(t_1)$
(Lemma \ref{Peano extrema prescribed}). Define $g = (g_1,g_2)$ as $g|_{[t_0,t_1]} = \varphi$ and $g_1,g_2$ affine-linear in each
segment $[t_{j-1},t_j]$ $(j=2, \dots ,N)$, and such that $g(t_j) = f(t_j)$ for all $j$. It is easy to check that
$\varphi \in \mathcal{SF} \cap B_\rho (f, \ve )$, which shows the density of $\mathcal{SF}$.
\end{proof}

According to the last proposition and Theorem \ref{Thm SF is spaceable}, $\mathcal{SF}$ is dense and lineable.
However, this does not imply that $\mathcal{SF}$ is dense-lineable. In fact, we have been not able to prove this point,
yet our conjecture is the truthfulness of the claim. In view of this, we will content ourselves with showing the (maximal)
dense-lineability of the broader class $\mathcal{TSF}$. With this aim, the forthcoming two auxiliary assertions will reveal useful.

\begin{lemma} \label{Lemma-partially constant}
Let $(Y,d)$ be a locally arc-connected metric space, and let $t_0 \in I$. Then the set
$$
\mathcal{D}_{t_0} := \{\varphi \in C(I,Y): \, \varphi \hbox{ \rm \ is constant on some neighborhood } U=U_\varphi \hbox{\rm \ of } t_0 \}
$$
is dense in $C(I,Y)$, when this space is endowed with the uniform metric $\rho (f,g) = \sup_{t \in I} d(f(t),g(t))$.
\end{lemma}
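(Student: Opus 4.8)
The plan is to prove density directly: given an arbitrary $f \in C(I,Y)$ and $\varepsilon > 0$, I will construct $\varphi \in \mathcal{D}_{t_0}$ with $\rho(\varphi,f) < \varepsilon$. The idea is to \emph{flatten} $f$ to its value $y_0 := f(t_0)$ on a small neighborhood of $t_0$ and then reconnect continuously to $f$ across a slightly larger buffer, keeping the whole modification inside a small ball centered at $y_0$ so that the uniform distance to $f$ stays controlled.

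First I would invoke local arc-connectedness to fix a small arc-connected neighborhood $V$ of $y_0$ with $V \subseteq B_d(y_0,\varepsilon/2)$. Then, by continuity of $f$ at $t_0$, I choose $\eta > 0$ so small that $f(t) \in V$ for every $t \in I$ with $|t-t_0| \le 2\eta$; in particular $d(f(t),y_0) < \varepsilon/2$ on that interval. (For an endpoint $t_0 \in \{0,1\}$ the relevant interval is one-sided and the construction below only needs a single buffer, so that case is easier.)

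Next I define $\varphi$ piecewise: set $\varphi \equiv y_0$ on $[t_0-\eta,t_0+\eta]$; set $\varphi = f$ outside $[t_0-2\eta,t_0+2\eta]$; and on each buffer interval $[t_0+\eta,t_0+2\eta]$ and $[t_0-2\eta,t_0-\eta]$ let $\varphi$ trace (after reparametrization) an arc contained in $V$ that joins $y_0$ to $f(t_0+2\eta)$, respectively $f(t_0-2\eta)$. Such arcs exist because $V$ is arc-connected and contains both endpoints $y_0$ and $f(t_0\pm 2\eta)$. Matching the prescribed values at the junctions $t_0\pm\eta$ and $t_0\pm 2\eta$ makes $\varphi$ continuous on $I$, and $\varphi$ is constant (equal to $y_0$) on the open neighborhood $(t_0-\eta,t_0+\eta)\cap I$ of $t_0$, so $\varphi \in \mathcal{D}_{t_0}$.

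Finally I estimate $\rho(\varphi,f)$. Outside $[t_0-2\eta,t_0+2\eta]$ one has $\varphi = f$, so the distance is $0$. On that interval $\varphi(t)$ lies in $V \subseteq B_d(y_0,\varepsilon/2)$ (it is either $y_0$ or a point of one of the arcs), while $f(t) \in V$ as well, so the triangle inequality gives $d(\varphi(t),f(t)) \le d(\varphi(t),y_0) + d(y_0,f(t)) < \varepsilon$ pointwise; a harmless preliminary shrinking of $V$ (working with $\varepsilon/3$ in place of $\varepsilon/2$) turns this into the strict uniform bound $\rho(\varphi,f) < \varepsilon$, which proves density. I do not expect a genuine obstacle in this argument; the one delicate point is exactly the reconnection step, where local arc-connectedness is indispensable: without arbitrarily small arc-connected neighborhoods of $y_0$ there need be no continuous path from $f(t_0\pm 2\eta)$ back to $y_0$ staying inside a prescribed small ball, and the $\varepsilon/2 + \varepsilon/2$ estimate would fail.
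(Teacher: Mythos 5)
Your proposal is correct and follows essentially the same route as the paper: both flatten $f$ to the constant value $f(t_0)$ on an inner subinterval, reconnect to $f$ on buffer intervals via paths inside a small arc-connected neighborhood $V$ of $f(t_0)$, and conclude with the triangle inequality through $f(t_0)$ (the paper likewise works with $\varepsilon/3$ to secure the strict inequality you flag at the end). No gaps.
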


\begin{proof}
Fix a ball $B_\rho (f,\ve ) \subset C(I,Y)$. Our goal is to show that $\mathcal{D}_{t_0} \cap B_\rho (f,\ve ) \ne \varnothing$.
Consider the ball $B_d(f(t_0),\ve /3) \subset Y$. By hypothesis, there is
a connected neighborhood $V$ of $f(t_0)$ in $Y$ such that $V \subset B_d(f(t_0),\ve /3 )$. Since $f$ is continuous at $t_0$,
there exists a neighborhood $[c,d]$ of $t_0$ in $I$ with $f([c,d]) \subset V$. We can suppose $0 < t_0 < 1$
(the case $t_0 \in \{0,1\}$ being easier to deal with), so that $c < t_0 < d$. Choose any $c',d'$ with $c < c' < t_0 < d' < d$. By local arc-connection,
we can find continuous mappings $g:[c,c'] \to V$, $h:[d',d] \to V$ satisfying $g(c) = f(c)$, $g(c') = f(t_0) = h(d')$ and
$h(d) = f(d)$. Let $U := [c',d']$ and define $\varphi : I \to Y$ as
$$
\varphi (t) =  \left\{
\begin{array}{ll}
                 f(t)  & \mbox{if } t \not\in [c,d] \\
                 g(t) & \mbox{if } t \in [c,c')  \\
                 f(t_0) & \mbox{if } t \in U \\
                 h(t)  & \mbox{if }  t \in (d',d].
\end{array} \right.
$$
Clearly $\varphi \in \mathcal{D}_{t_0}$. Moreover,
\begin{equation*}
\begin{split}
\rho (f,\varphi ) & = \sup_{t \in [c,d]} d(f(t),\varphi (t)) \\
                  & \le \sup_{t \in [c,d]} (d(f(t),f(t_0)) + d(f(t_0),\varphi (t))) \le \ve /3 + \ve /3 < \ve ,
\end{split}
\end{equation*}
due to the triangle inequality and the fact
$\varphi ([c,d]) = g([c,c']) \cup h([d',d]) \subset V \subset B_d(f(t_0),\ve /3 )$. Consequently,
$f \in \mathcal{D}_{t_0} \cap B_\rho (f,\ve )$, and we are done.
\end{proof}

\begin{lemma} \label{Lemma TSF1 spaceable}
The subfamily of \,$\mathcal{TSF}$ given by
$$
\mathcal{TSF}_1 := \{\varphi \in C(I,\R^2): \, (\varphi (U))^0 \ne \varnothing \, \hbox{\rm \ for all neighborhood } \,U \hbox{\rm \ of } \,1\}
$$
is spaceable in \,$C(I,\R^2)$.
\end{lemma}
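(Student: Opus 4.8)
The plan is to imitate the spaceability argument of Theorem~\ref{Thm SF is spaceable}, but to redistribute the filling blocks so that each generator fills genuine squares on intervals that accumulate at the endpoint $1$. The essential new difficulty is that a single continuous function cannot fill squares of fixed size on blocks tending to $1$ without losing continuity there; I will resolve this by letting the amplitudes of the fillings shrink to $0$ as the blocks approach $1$, while keeping the squares centered at the origin so that multiplication by a nonzero scalar still yields a set with nonempty interior.

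First I would fix a tiling of $(0,1)$ by consecutive intervals $R_j$ whose right endpoints increase to $1$, say $R_j = (1 - 2^{-(j-1)}, 1 - 2^{-j})$, and inside each $R_j$ I would choose pairwise disjoint closed subintervals $J_{n,j}$ $(n \ge 1)$. Then the whole family $\{J_{n,j}\}_{n,j \ge 1}$ is pairwise disjoint and, for each fixed $n$, the blocks $J_{n,j}$ lie in $R_j$, so $J_{n,j} \to 1$ as $j \to \infty$. Using Lemma~\ref{Peano extrema prescribed} with $Y = [-2^{-j},2^{-j}]^2$, I would define $g_n \in C(I,\R^2)$ to be, on each $J_{n,j}$, a continuous surjection onto $[-2^{-j},2^{-j}]^2$ taking the value $(0,0)$ at the two endpoints of $J_{n,j}$, and to equal $(0,0)$ off $\bigcup_j J_{n,j}$.

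The next step is to verify that each $g_n$ is continuous and nonzero. Continuity away from $1$ is immediate since the fillings vanish at the block endpoints and $g_n \equiv (0,0)$ on the gaps; continuity at $1$ is exactly where the shrinking amplitudes are used, because in a small enough neighborhood of $1$ only blocks $J_{n,j}$ with large $j$ meet it, and there $\|g_n(t)\|_1 \le 2^{-j} \to 0 = \|g_n(1)\|_1$. As the supports $\{t \in I : g_n(t) \ne (0,0)\}$ are mutually disjoint, Lemma~\ref{Lemma-disjointsupports} shows that $\{g_n\}_{n \ge 1}$ is a basic sequence; hence $M := \ovl{\rm span}\{g_n : n \ge 1\}$ is a closed, infinite dimensional subspace of $C(I,\R^2)$.

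Finally, I would take $f \in M \setminus \{0\}$ and use the basic-sequence representation to write $f = \sum_n c_n g_n$, a uniformly convergent series with $c_m \ne 0$ for some $m$. Given any neighborhood $U$ of $1$, I choose $j$ with $J_{m,j} \subset U$, which is possible because $J_{m,j} \to 1$. On $J_{m,j}$ the disjointness of supports forces $f = c_m g_m$, so $f(J_{m,j}) = [-|c_m| 2^{-j}, |c_m| 2^{-j}]^2$ is a true square of positive side, its interior being nonempty precisely because $c_m \ne 0$. By monotonicity of the interior, $(f(U))^0 \supseteq (f(J_{m,j}))^0 \ne \varnothing$, and as $U$ was arbitrary we conclude $f \in \mathcal{TSF}_1$. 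Thus $M \setminus \{0\} \subset \mathcal{TSF}_1$, which establishes the spaceability. The one genuinely delicate point, as anticipated, is the simultaneous requirement that the fillings accumulate at $1$ (to reach every neighborhood $U$) yet shrink in size (to preserve continuity at $1$), together with the centering at the origin that makes the nonempty interior survive scaling by $c_m$.
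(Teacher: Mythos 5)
Your proposal is correct and follows essentially the same route as the paper: disjointly supported generators, each filling squares of shrinking side length on blocks accumulating at $1$ (the paper uses $(1/k)I^2$ on intervals indexed by a partition of $\N$, you use $[-2^{-j},2^{-j}]^2$ on a dyadic tiling), then Lemma~\ref{Lemma-disjointsupports} to get a basic sequence and the closed span $M$. The final verification that any $f=\sum_n c_n g_n$ with $c_m\ne 0$ restricts to $c_m g_m$ on a block inside any given neighborhood of $1$, yielding a nondegenerate square in the image, is exactly the paper's argument.
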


\begin{proof}
We need a modification of the construction given in the proof of Theorem \ref{Thm SF is spaceable}.
Fix once more any sequence $(a_n)$ with $a_1 < a_2 < \cdots < a_n < \cdots \to 1$ and consider a partition of $\N$ into
infinitely many pairwise disjoint sequences $\{p(n,1) < p(n,2) < p(n,3) < \cdots \}$ $(n=1,2, \dots )$.
By Lemma \ref{Peano extrema prescribed}, for every pair $(n,k) \in \N \times \N$ there exists a mapping $g_{n,k} \in CS([a_{p(n,k)},a_{p(n,k)+1}],(1/k)I^2)$ with $g_{n,k}(a_{p(n,k)}) = (0,0) = g_{n,k}(a_{p(n,k){+1}})$.
Let us call $I_{n,k} := [a_{p(n,k)},a_{p(n,k)+1}]$ and extend continuously each $g_{n,k}$ on $I$ by defining it as $(0,0)$ on $I \setminus I_{n,k}$.
Now, fix $n \in \N$ and define $f_n := \sum_{k=1}^\infty g_{n,k}$. Note that this series is in fact a finite sum at each point $t \in I$, so it is well defined. If $t < 1$ there is a neighborhood of $t$ lying at most on two intervals $I_{n,k}$ $(k=1,2, \dots )$, which entails the continuity of $f_n$ at $t$. Observe that the continuity at $t=1$ is guaranteed by the fact $g_{n,k}(I_{n,k}) = k^{-1} I^2$ for all $k$, from which we conclude that each $f_n$ is continuous on $I$. Since the supports of the functions $f_n$ $(n \ge 1)$ are mutually disjoint, Lemma \ref{Lemma-disjointsupports}
tells us that they form a basic sequence.

\vskip .15cm

As in the proof of Theorem \ref{Thm SF is spaceable}, define
$$
M := \ovl{\rm span} \, \{f_n\}_{n \ge 1}.
$$
Then $M$ is a closed infinite dimensional vector subspace of $C(I,\R^2)$. Let $f \in M \setminus \{0\}$. Then there are uniquely determined
real coefficients $c_1,c_2, \dots$ with some $c_m \ne 0$ such that $f = \sum_{n=1}^\infty c_n f_n$, where the convergence of the series is
uniform on $I$. Fix a neighborhood $U$ of $t=1$ in $I$. Since $a_{p(m,k)} \to 1$ as $k \to \infty$, we can find $k_0 \in \N$ such that
$[a_{p(m,k_0)},a_{p(m,k_0)+1}] \subset U$. Therefore
\begin{equation*}
\begin{split}
f(U) &\supset f([a_{p(m,k_0)},a_{p(m,k_0)+1}]) = c_m f_m([a_{p(m,k_0)},a_{p(m,k_0)+1}]) \\
     &= c_m g_{p(m,k_0)}([a_{p(m,k_0)},a_{p(m,k_0)+1}]) = c_m k_0^{-1} I^2,
\end{split}
\end{equation*}
hence $(f(U))^0 \ne \varnothing$. In other words, $f \in \mathcal{TSF}_1$, which shows the desired spaceability.
\end{proof}

Of course, the last construction can be reproduced for any fixed $t_0 \in I$, but $t_0 = 1$ is enough for us.

\begin{theorem}
The family \,$\mathcal{TSF}$ is maximal dense-lineable in $C(I,\R^2)$. Hence the family $\la$-$\mathcal{SF}$ is
maximal dense-lineable as well.
\end{theorem}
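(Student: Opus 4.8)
The plan is to derive maximal dense-lineability from the spaceability already in hand, via Theorem \ref{Thm-denselineability-OrdonezLBG}. I would take $E = C(I,\R^2)$, which is a separable Banach space, so that $\dim(E) = \mathfrak c$ by Baire's theorem. As the $\mathfrak c$-lineable ingredient I would use $A := \mathcal{TSF}_1$. By Lemma \ref{Lemma TSF1 spaceable} this set contains a closed infinite dimensional subspace, and any closed infinite dimensional subspace of a separable infinite dimensional F-space has dimension $\mathfrak c$; hence $A$ is $\mathfrak c$-lineable. Note $A \subset \mathcal{TSF}$, so a dense $\mathfrak c$-dimensional subspace sitting (off the origin) inside $A$ will automatically sit inside $\mathcal{TSF}$.

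For the dense-lineable ingredient I would take $B := \mathcal{D}_1$, the space of those $\varphi \in C(I,\R^2)$ that are constant on some neighborhood of $1$ (Lemma \ref{Lemma-partially constant} applied with $Y = \R^2$ and $t_0 = 1$). This $B$ is a genuine vector subspace, since the sum of two functions each constant near $1$ is again constant near $1$, and scalar multiples clearly remain in $B$; and it is dense by that lemma. Being a dense vector subspace, $B$ is in particular dense-lineable.

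The crux is to verify the two compatibility conditions $A \cap B = \varnothing$ and $A + B \subset A$. For disjointness, if $g \in \mathcal{D}_1$ is constant on a neighborhood $W$ of $1$ then $g(W)$ is a single point, so $(g(W))^0 = \varnothing$ and therefore $g \notin \mathcal{TSF}_1$. For the additive condition, fix $\varphi \in \mathcal{TSF}_1$ and $g \in \mathcal{D}_1$ with $g \equiv c$ on a neighborhood $W$ of $1$. Given an arbitrary neighborhood $U$ of $1$, the set $U \cap W$ is again a neighborhood of $1$, and on it $\varphi + g = \varphi + c$; hence $(\varphi+g)(U) \supset (\varphi+g)(U\cap W) = \varphi(U\cap W) + c$, a translate of a set with nonempty interior, which therefore has nonempty interior itself. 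Thus $\varphi + g \in \mathcal{TSF}_1 = A$, giving $A + B \subset A$.

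With these four hypotheses satisfied, Theorem \ref{Thm-denselineability-OrdonezLBG} (with $\alpha = \mathfrak c$) yields a dense vector subspace $M$ with $\dim(M) = \mathfrak c$ and $M \setminus \{0\} \subset A \subset \mathcal{TSF}$; since $\dim(C(I,\R^2)) = \mathfrak c$, this is precisely the maximal dense-lineability of $\mathcal{TSF}$. The final assertion is then immediate from the inclusion $\mathcal{TSF} \subset \la$-$\mathcal{SF}$ noted earlier, the very same $M$ serving as witness. I expect the only genuine subtlety to be the choice of the auxiliary pair $(A,B)$: one must work with $\mathcal{TSF}_1$ and the localized space $\mathcal{D}_1$ rather than with $\mathcal{TSF}$ directly, precisely because the ``filling near $1$'' condition is stable under adding a function that is merely constant near $1$ (such an addition only translates the relevant piece of the image), and this stability is exactly what makes $A + B \subset A$ hold. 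A naive attempt pairing all of $\mathcal{TSF}$ with a global dense complement would fail this additive condition.
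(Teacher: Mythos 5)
Your proposal is correct and follows essentially the same route as the paper: the same pair $A=\mathcal{TSF}_1$, $B=\mathcal{D}_1$, the same verification that $B$ is a dense vector subspace, that $A\cap B=\varnothing$, and that $A+B\subset A$ via translation of $\varphi(U\cap W)$, followed by the same application of Theorem \ref{Thm-denselineability-OrdonezLBG} with $\alpha=\mathfrak c$. Your closing remark about why one must localize to $\mathcal{TSF}_1$ is exactly the point of the paper's construction.
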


\begin{proof}
It is enough to prove that the subfamily $\mathcal{TSF}_1$ defined in Lemma \ref{Lemma TSF1 spaceable}
is maximal dense-lineable in $C(I,\R^2)$.
With this aim, observe first that $A := \mathcal{TSF}_1$ is $\mathfrak{c}$-lineable in $C(I,\R^2)$ by the mentioned lemma and Baire's theorem.
Secondly, the set
$$
B := \{\varphi \in C(I,\R^2): \, \varphi \hbox{ \,is constant on some neighborhood } U=U_\varphi \hbox{ of }1 \}
$$
is dense in $C(I,\R^2)$, due to Lemma \ref{Lemma-partially constant}. Trivially, $B$ is also a vector space, whence $B$ is dense-lineable. It is also straightforward that
$A \cap B = \varnothing$. Finally, if $\varphi \in A$, $\psi \in B$ and $U$ is a neighborhood of $1$, there are a neighborhood $V \subset U$ of $1$ and a constant $C \in \R^2$
such that $\psi (t) = C$ for all $t \in V$ and $(\varphi (V))^0 \ne \varnothing$. Then
$$
((\varphi + \psi ) (U))^0 \supset ((\varphi + \psi ) (V))^0 = C + (\varphi (V))^0 \ne \varnothing .
$$Thus $((\varphi + \psi ) (U))^0 \ne \varnothing$, that is, $\varphi + \psi \in A$ and $A+B \subset A$.
The proof is finished after a direct application of Theorem \ref{Thm-denselineability-OrdonezLBG} with $E := C(I,\R^2)$ and $\al := \mathfrak{c}$.
\end{proof}

We conclude this paper with the following theorem. Recall that the vector space $C(I,\R^2)$
becomes an algebra if the multiplication is defined coordinatewise.

\begin{theorem} \label{Thm SF is strongly algebrable}
The family \,$\mathcal{SF}$ is strongly algebrable.
\end{theorem}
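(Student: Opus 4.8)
The plan is to exhibit an explicit free generating set built from a single Peano curve via exponential substitutions, and then to verify the two conditions of the free-algebra criterion recalled just after the definition of (strong) algebrability. First I would fix a Peano curve $\sigma = (\sigma_1,\sigma_2) \in {\mathcal P} = CS(I,I^2)$, whose existence is guaranteed by the Hahn--Mazurkiewicz theorem, so that $\sigma(I) = I^2$ and each $\sigma_i : I \to [0,1]$ is continuous. Next I would choose a set $H \subset (0,\infty)$ that is linearly independent over $\Q$ and of cardinality $\mathfrak{c}$; such a set exists because $\R$ is a vector space of dimension $\mathfrak{c}$ over $\Q$. For each $r \in H$ I put $F_r := (e^{r\sigma_1},\, e^{r\sigma_2}) \in C(I,\R^2)$ (these are pairwise distinct, since $e^{r\sigma_1}=e^{r'\sigma_1}$ forces $r=r'$), and I claim that $\{F_r : r \in H\}$ freely generates an algebra all of whose nonzero elements lie in $\mathcal{SF}$.

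The heart of the argument is the following computation. Fix distinct $r_1,\dots,r_N \in H$ and a nonzero polynomial $P(y_1,\dots,y_N) = \sum_{\alpha} c_\alpha\, y_1^{\alpha_1}\cdots y_N^{\alpha_N}$ without constant term, the sum being over a nonempty finite set of \emph{nonzero} multi-indices $\alpha$ with all $c_\alpha \ne 0$. Since the algebra multiplication is coordinatewise, evaluating $P$ at $F_{r_1},\dots,F_{r_N}$ gives $P(F_{r_1},\dots,F_{r_N}) = (p\circ\sigma_1,\, p\circ\sigma_2)$, where $p$ is the one-variable exponential polynomial
$$ p(x) = \sum_{\alpha} c_\alpha \exp\Big(\big(\textstyle\sum_{j}\alpha_j r_j\big)\,x\Big). $$
Because every occurring $\alpha$ is nonzero and each $r_j>0$, all the exponents $\lambda_\alpha := \sum_j \alpha_j r_j$ are strictly positive; and because $r_1,\dots,r_N$ are $\Q$-linearly independent, distinct multi-indices yield distinct exponents. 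Hence $p$ is a nontrivial linear combination of pairwise distinct exponentials $x\mapsto e^{\lambda_\alpha x}$, so it is a nonzero, nonconstant, real-analytic function on $\R$.

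Finally I would read off the image of the curve. Using that $\sigma$ is surjective onto $I^2$, as $t$ runs over $I$ the pair $(\sigma_1(t),\sigma_2(t))$ runs over all of $[0,1]^2$, so the two coordinates vary independently and
$$ P(F_{r_1},\dots,F_{r_N})(I) = \{(p(u),p(v)) : (u,v)\in[0,1]^2\} = p([0,1]) \times p([0,1]). $$
Being real-analytic and nonconstant, $p$ is nonconstant on $[0,1]$, so $p([0,1])$ is a nondegenerate compact interval $[m,M]$ with $m<M$; thus the image is the genuine square $[m,M]^2$, which is Jordan measurable of content $(M-m)^2>0$, and in particular the curve is not the zero map. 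This establishes simultaneously $P(F_{r_1},\dots,F_{r_N})\ne 0$ and $P(F_{r_1},\dots,F_{r_N})\in\mathcal{SF}$, which is exactly the free-algebra criterion; as $H$ is infinite (indeed of size $\mathfrak{c}$), $\mathcal{SF}$ is strongly algebrable, in fact strongly $\mathfrak{c}$-algebrable.

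The step I expect to be the main obstacle, and the reason the disjoint-support construction of Theorem~\ref{Thm SF is spaceable} cannot be recycled, is guaranteeing \emph{freeness}: with disjoint supports a mixed monomial such as $F_{r_1}F_{r_2}$ would vanish identically, so $P(F_{r_1},\dots,F_{r_N})$ could be $0$ for a nonzero $P$. Passing to exponentials and imposing $\Q$-linear independence of the indices is precisely what forces distinct monomials to carry distinct exponential frequencies, ruling out all cancellation; the complementary point, that the image is automatically a full product of intervals, is what makes positivity of the content drop out for free from the surjectivity of $\sigma$.
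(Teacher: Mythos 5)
Your proof is correct, and it takes a genuinely different route from the paper's. The paper builds a countable family $\{f_n\}$ supported on interlocking systems of intervals $I_{n,k}$, with $f_n$ a scaled Peano curve on its ``own'' intervals and carefully chosen rational constants on the others; freeness then hinges on selecting a rational point where a coefficient polynomial $Q_m$ does not vanish along the sequence $k^{-1}q$, and the image analysis requires the inverse mapping theorem together with Sard's theorem to show that the boundary of $S(R)$, $S(x,y)=(H(x),H(y))$, is Lebesgue null. Your exponential substitution $F_r=(e^{r\sigma_1},e^{r\sigma_2})$ with $\Q$-linearly independent frequencies sidesteps both difficulties at once: distinct monomials acquire distinct positive exponents, so $p=\sum_\alpha c_\alpha e^{\lambda_\alpha x}$ is a nontrivial, hence nonconstant, real-analytic exponential sum (this is the standard device of Bartoszewicz--G\l\k{a}b for strong algebrability), and the surjectivity of $\sigma$ onto $I^2$ makes the image \emph{exactly} the product $p([0,1])\times p([0,1])=[m,M]^2$, whose Jordan measurability and positive content are immediate --- no Sard, no boundary estimate. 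The one point worth making explicit is that $p$ nonconstant on $\R$ implies $p$ nonconstant on $[0,1]$ (identity theorem for real-analytic functions), which you do invoke. As a bonus, your generating set has cardinality $\mathfrak{c}$, so you obtain strong $\mathfrak{c}$-algebrability, formally stronger than the countably generated free algebra produced in the paper.
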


\begin{proof}
As a first step, we construct an appropriate sequence $\{f_n\}_{n \ge 1}$ genera\-ting a free algebra in $C(I,\R^2)$.
By Lemma \ref{Peano extrema prescribed}, there exists a Peano curve $\varphi \in CS(I,{[-1,1]}^2)$ such that $\varphi (0) = (0,0) = \varphi (1)$.
If $T = [a,b] \subset \R$ is an interval, we define $\varphi_T : [a,b] \to \R^2$ as $\varphi_T(t) = \varphi \big({t-a \over b-a} \big)$, so
that $\varphi (T) = {[-1,1]}^2$ and $\varphi_T(a) = (0,0) = \varphi_T(b)$. Denote by $\Q$ the set of rational numbers and
consider the countable set $J = \{\sigma_k\}_{k \ge 1}$ defined as
$$
J := \{\sigma = (q_1,q_2,..,q_j,0,0,...) \in (0,+\infty )^\N : \, q_1,..., q_j \in \Q \cap (-1,1) , \, j \in \N\}.
$$
Take a sequence $(a_n)$ with $0 < a_1 < \cdots < a_n < \cdots \to 1$
and consider the sequence of intervals $[a_n,a_{n+1}]$ $(n \ge 1)$. Then we can extract from it infinitely many countable families of
sequences of intervals $\{I_{n,k} : \, k \in \N\}$ $(n \in \N )$ such that $I_{n,k} \cap I_{m,l} = \varnothing$ as soon as $(n,k) \ne (m,l)$ and,
for every $n \in \N$, the intervals $I_{n,k}$ approach $1$ as $k \to \infty$. Split each interval $I_{n,k}$ into three segments of equal length, say
$I_{n,k} = I_{n,k,1} \cup I_{n,k,2} \cup I_{n,k,3}$, where $I_{n,k,2}$ is the middle segment.

\vskip .15cm

Fix $n \in \N$ and define the mapping
$f_n : I \to \R^2$ as follows. For all $k \in \N$, we set $f_n := k^{-1} \varphi_{I_{n,k,2}}$ on $I_{n,k,2}$
and $f_n := (0,0)$ on $I_{n,k,1} \cup I_{n,k,3}$. If $m \ne n$ then we set $f_n := (k^{-1} q_n, k^{-1} q_n)$ on $I_{m,k,2}$,
where $q_n$ is the $n$th component of the sequence $\sigma_k \in J$. Both components of $f_n$ are defined as
affine linear on $I_{m,k,1}$ and $I_{m,k,3}$, with value $(0,0)$ at the left endpoint of $I_{m,k,1}$ and at the right endpoint of $I_{m,k,3}$.
Finally, set $f_n := (0,0)$ on $I \setminus \bigcup_{k=1}^\infty I_{n,k}$.
Each $f_n$ is clearly continuous on $[0,1)$, while its continuity at $t=1$ (where $f_n$ takes the value $(0,0)$) is guaranteed
by the fact that $\sup_{t \in I_{n,k}} \|f_n(t)\|_1 \le k^{-1}$ for all $k \in \N$.

\vskip .15cm

Now, let $N \in \N$ and consider a nonzero polynomial $P$ of $N$ variables without constant term, say $P(x_1, \dots ,x_N)$.
Without loss of generality, we may assume that $x_N$ appears explicitly in $P$, so that there is $m \in \N$ and polynomials
$Q_j$ $(j=0,1, \dots ,N-1)$ of $N-1$ real variables, with $Q_m \not \equiv 0$, such that
$$
P(x_1, \dots ,x_N) = \sum_{j=0}^m Q_j(x_1, \dots ,x_{N-1}) \, x_N^j .
$$
Let $F := P(f_1, \dots ,f_N)$.
Our aim is to show that {$F \in \mathcal{SF}$ (it must also be proved that $F\not\equiv0$, but this is unnecessary because $0\notin\mathcal{SF}$)}.

\vskip .15cm

Assume first that $Q_m(0, \dots ,0) \ne 0$. Since $Q_m$ is continuous, there is $r \in (0,1)$ such that $Q_m{(}x_1, \dots ,x_{N-1}) \ne 0$
for all $(x_1, \dots ,x_{N-1}) \in (-r,r)^{N-1} \setminus \{(0, \dots ,0)\}$. Taking $p \in \N$ with $1/p < r$ and $q_j := 1/p$
$(j=1, \dots ,N-1)$, we get the existence of a point $(q_1, \dots ,q_{N-1}) \in (\Q \cap (-1,1))^{N-1}$ such that
\begin{equation}\label{Eq2}
Q_m(k^{-1}q_1, \dots ,k^{-1}q_{N-1}) \ne 0 \hbox{ \,\,for all } \,k \in \N .
\end{equation}
If, on the contrary, we had $Q_m(0, \dots ,0) = 0$, then
we would get a point $q = (q_1, \dots ,q_{N-1}) \in (\Q \cap (-1,1))^{N-1}$ satisfying \eqref{Eq2} too.
In order to see this, assume, by way of contradiction, that for each point $p=(p_1, \dots ,p_{N-1})$
$\in (\Q \cap (-1,1))^{N-1}$
there are
infinitely many $t \in \R$ with $Q_m(t p_1, \dots , t p_{N-1}) = 0$. Since the left hand side of the latter equation
is a polynomial in the variable $t$, we would have $Q_m(t p_1, \dots , t p_{N-1}) = 0$ for all $t$. Fixing $t$
and taking into account the density of \,$\Q \cap (-1,1)$ \,in $(-1,1)$ and the continuity of $Q_m$, we get
$Q_m(t x_1, \dots , t x_{N-1}) = 0$ for all $(x_1, \dots ,x_N) \in (-1,1)^{N-1}$ and all $t \in \R$, so $Q_m \equiv 0$,
a contradiction. Hence there is $p \in (\Q \cap (-1,1))^{N-1}$ such that the set of \,$t \in \R$ \,for
which \,$Q_m(t p_1, \dots , t p_{N-1}) = 0$ \,is finite. Since $0$ is one of such $t$'s, there is $s \in \N$ with
$Q_m(t p_1, \dots , t p_{N-1}){\ne} 0$ for all $t \in (0,1/s]$. Therefore we get \eqref{Eq2} if we set
$q = (s^{-1}p_1, \dots , s^{-1}p_{N-1})$.

\vskip .15cm

Let $\sigma := (q_1, \dots ,q_{N-1},0,0, \dots ) \in J$, where $(q_1, \dots ,q_{N-1}) \in$ \break
$(\Q \cap (-1,1))^{N-1}$ satisfies \eqref{Eq2}.
Then there is $k \in \N$ such that $\sigma = \sigma_k$. Consider the interval $I_{N,k}$ and its subinterval $I_{N,k,2}$.
It happens that, for every $t \in I_{N,k,2}$,
%

\begin{align}\label{Eq3}
\!\!F(t) &= P(f_1(t), \cdots ,f_N(t)) \nonumber\\
     &= P((k^{-1}q_1,k^{-1}q_1), \dots ,(k^{-1}q_{N-1},k^{-1}q_{N-1}),k^{-1} \varphi_{I_{N,k,2}}(t)) \nonumber\\
     &= \sum_{j=0}^m Q_j((k^{-1}q_1,k^{-1}q_1), \dots ,(k^{-1}q_{N-1},k^{-1}q_{N-1})) (k^{-1} \varphi_{I_{N,k,2}}(t))^j.
\end{align}

{Recall that, given any polynomial $H(x_1, \dots ,x_N)$, we have that}\linebreak $H((a_1,b_1), \cdots ,(a_N,b_N)) = (H(a_1, \dots ,a_N),H(b_1, \dots ,b_N))$.
By the definition of the $f_n$'s, the image $L_2 := F(I \setminus I_{N,k})$ is the union of two piecewise continuously differentiable curves
in $\R^2$, so having empty interior $L_2^0$ (hence $L_2 = \partial L_2$) and Lebesgue measure $\la (\partial L_2) = 0$. Thanks to \eqref{Eq3},
the set $L_1 := F(I_{N,k})$ is the image of the square $R := [-1/k,1/k]^2$ under the $C^1$-mapping $S:\R^2 \to \R^2$ given by
$S(x,y) = (H(x),H(y))$, where $H$ is the nonconstant polynomial $H(x) = \sum_{j=0}^m \al_j \, x^j$, with
$\al_j := Q_j(k^{-1}q_1, \dots ,k^{-1}q_{N-1})$ (it is nonconstant because, from \eqref{Eq2}, $\al_m \ne 0$; this also yields $F \not\equiv 0$).
Therefore there is a point $x_0 \in (-1/k,1/k)$ such that $H'(x_0) \ne 0$, so the determinant of the Jacobian
matrix $J_S (x,y)$ of the transformation $S$ \,at $(x_0,x_0) \in R$ is $H'(x_0)^2 \ne 0$. By the inverse mapping theorem,
$S$ \,has a local differentiable (hence continuous) inverse at $(x_0,x_0)$, and so $S$ \,is locally open at this point, which
yields $L_1^0 = (S(R))^0 \ne \varnothing$.

\vskip .15cm

Finally, since $L_1$ is compact (hence closed) in $\R^2$, one has that
$\partial L_1 \subset L_1 = S(R) = S(R^0) \cup S(\partial R)$. Since $S$ is locally open at those points $(x,y) \in R^0$ with
det$\,J_S(x,y) \ne 0$, we deduce that $\partial L_1 \subset S(C) \cup S(\partial R)$, where $C := \{(x,y) \in R^0: \, {\rm det} \,J_{S}(x,y) = 0\}$.
On one hand, since $S$ is continuously differentiable on $\R^2$, Sard's theorem (see e.g.~\cite[p.~47]{sternberg})
tells us that $\la (S(C)) = 0$. On the other hand, the continuous differentiability of $S$ on $\R^2$
implies the well-known estimation
$$
\la (S(\partial R)) \le \int_{\partial R} |{\rm det} \,J_S| \, d\la \le \sup_{\partial R}|{\rm det} \,J_S| \cdot \la (\partial R) = 0.
$$
Thus, $\la (S(\partial R)) = 0$, hence $\la (\partial L_1) = 0$. To sum up, we get $(F(I))^0 \supset L_1^0 \ne \varnothing$
and
$$
\la (\partial F(I)) \le \la ((\partial L_1) \cup (\partial L_2))
\le \la (\partial L_1) + \la (\partial L_2) = 0.
$$
This entails $(F(I))^0 \ne \varnothing$ \,and \,$\la (\partial F(I)) = 0$.
In other words, $F \in \mathcal{SF}$, which finishes the proof.
\end{proof}

\begin{remark}
{\rm The mere {\it algebrability} of $\mathcal{SF}$ can be obtained in an easier way as follows.
Consider a sequence $\{a_1 < a_2 < \cdots \} \subset [0,1)$ and the intervals $I_n = [a_n,a_{n+1}]$ $(n \ge 1)$.
By Lemma \ref{Peano extrema prescribed}, for every $n \in \N$ there is $g_n \in CS(I_n,I^2)$ such that $g_n(a_n) = (0,0) = g_n(a_{n+1})$.
Define the continuous function $f_n:I \to \R^2$ as $g_j$ on $I_j$ $(j=1, \dots ,n)$ and $(0,0)$ on $I \setminus \bigcup_{j=1}^n I_j$.
Let ${\mathcal A}$ denote the algebra ge\-ne\-ra\-ted by the $f_n$'s. Then ${\mathcal A}$ is infinitely ge\-ne\-ra\-ted,
because each $f_n$ cannot be written as
$P(f_1, \dots ,f_{n-1})$, $P$ being a nonconstant polynomial in $n-1$ real variables: indeed, such a function $P(f_1, \dots ,f_{n-1})$
would be zero on $I_n$, which is absurd since $f_n = g_n$ on $I_n$. Now, fix $N$ and a nonzero polynomial $P(x_1, \dots ,x_N)$ of $N$ real variables.
It must be proved that the mapping $F := P(f_1, \cdots ,f_N)$ either is identically $(0,0)$ or belongs to $\mathcal{SF}$ (observe that
$F \not\equiv (0,0)$ is not demanded; in fact, ${\mathcal A}$ is {\it not} a {\it free} algebra because, for instance, the nonzero polynomial
without constant term $P(x,y) := x^2y - xy^2$ satisfies $P(f_1,f_2) \equiv (0,0)$). Without loss of generality, it can be assumed that $f_N$
appears explicitly in the expression of $P(x_1, \dots ,x_N)$ as sum of monomials $x_1^{m_1} \cdots x_N^{m_N}$. Consider the
one-variable polynomials without constant term $P_1(x) := P(x,x, \dots ,x)$, $P_2(x) := P(0,x, \dots ,x)$, $P_3(x) := P(0,0,x, \dots ,x),
\dots , P_N(x) :=$ \break
$P(0,0, \dots ,0,x)$. According to the definition of the $f_n$'s, we have that $F$ equals $P_j (g_j)$ on $I_j$.
Therefore $F(I) = \bigcup_{j=1}^N S_j(I^2)$, where $S_j(x,y) := (P_j(x),P_j(y))$ $(j=1, \dots ,N)$. If $P_j$ is constant then $P_j \equiv 0$, so
$S_j(I^2) = \{(0,0)\}$. If $P_j$ is not constant then the same Sard-change-of-variable argument of the final part of the proof of Theorem
\ref{Thm SF is strongly algebrable} leads us to $\la(\partial S_j(I^2)) = 0$ and $(S_j(I^2))^0 \ne \varnothing$.
Hence either $F \equiv (0,0)$ or $\la(\partial F(I)) = 0$ and $(F(I))^0 \ne \varnothing$, as required.
}
\end{remark}



\begin{bibdiv}
\begin{biblist}

\bib{nga}{article}{
  author={Albuquerque, N.G.},
  title={Maximal lineability of the set of continuous surjections},
  journal={Bull. Belg. Math. Soc. Simon Stevin},
  volume={21},
  date={2014},
  pages={83--87}
}

\bib{AlbBerPelSeo}{article}{
  author={Albuquerque, N.G.},
  author={Bernal-Gonz\'alez, L.},
  author={Pellegrino, D.},
  author={Seoane-Sep\'ulveda, J.B.},
  title={Peano curves on topological vector spaces},
  status={Preprint (2014)},
  journal={arXiv:1404.5876 [math.GN]},
}

\bib{arongarciaperezseoane2009}{article}{
  author={Aron, R.M.},
  author={Garc\'{i}a-Pacheco, F.J.},
  author={P\'{e}rez-Garc\'{i}a, D.},
  author={Seoane-Sep\'{u}lveda, J. B.},
  title={On dense-lineability of sets of functions on $\mathbb {R}$},
  journal={Topology},
  volume={48},
  date={2009},
  pages={149--156},
}

\bib{AronGS}{article}{
  author={Aron, R.},
  author={Gurariy, V.I.},
  author={Seoane-Sep\'ulveda, J.B.},
  title={Lineability and spaceability of sets of functions on $\mathbb{R}$},
  journal={Proc. Amer. Math. Soc.},
  volume={133},
  date={2005},
  pages={795--803},
}

\bib{APS_Studia}{article}{
   author={Aron, Richard M.},
   author={P{\'e}rez-Garc{\'{\i}}a, D.},
   author={Seoane-Sep{\'u}lveda, J.B.},
   title={Algebrability of the set of non-convergent Fourier series},
   journal={Studia Math.},
   volume={175},
   date={2006},
   pages={83--90},
}

\bib{bartoszewiczglab2012b}{article}{
  author={Bartoszewicz, A.},
  author={G\l \c ab, S.},
  title={Strong algebrability of sets of sequences of functions},
  journal={Proc. Amer. Math. Soc.},
  volume={141},
  date={2013},
  pages={827--835},
}

\bib{BarGP}{article}{
  author={Bartoszewicz, A.},
  author={G\l \c ab, S.},
  author={Paszkiewicz, A.},
  title={Large free linear algebras of real and complex functions},
  journal={Linear Algebra Appl.},
  volume={438},
  date={2013},
  pages={3689--3701},
}

\bib{bernal2008}{article}{
  author={Bernal-Gonz{\'a}lez, L.},
  title={Dense-lineability in spaces of continuous functions},
  journal={Proc. Amer. Math. Soc.},
  volume={136},
  date={2008},
  pages={3163--3169},
}

\bib{bernal2010}{article}{
  author={Bernal-Gonz{\'a}lez, L.},
  title={Algebraic genericity of strict-order integrability},
  journal={Studia Math.},
  volume={199},
  date={2010},
  pages={279--293},
}


\bib{BernalOrd}{article}{
   author={Bernal-Gonz{\'a}lez, L.},
   author={Ord{\'o}{\~n}ez Cabrera, M.},
   title={Lineability criteria, with applications},
   journal={J. Funct. Anal.},
   volume={266},
   date={2014},
   pages={3997--4025},
}

\bib{BerPS}{article}{
  author={Bernal-Gonz{\'a}lez, L.},
   author={Pellegrino, D.},
   author={Seoane-Sep{\'u}lveda, J.B.},
   title={Linear subsets of nonlinear sets in topological vector spaces},
   journal={Bull. Amer. Math. Soc. (N.S.)},
   volume={51},
   date={2014},
   pages={71--130},
   }


\bib{diestel1984}{book}{
   author={Diestel, J.},
   title={Sequences and series in Banach spaces},
   publisher={Springer-Verlag},
   place={New York},
   date={1984},
}

\bib{Dolzenko}{article}{
   author={Dol$\check{\rm z}$enko, E.P.},
   title={The boundary properties of an arbitrary function},
   language={Russian},
   journal={Izv. Akad. Nauk. SSSR, Ser. Mat.},
   volume={31},
   date={1967},
   pages={3--14},
}

\bib{foran1991}{book}{
  author={Foran, J.}, 
  title={Fundamentals of real analysis},
  series={Monographs and Textbooks in Pure and Applied Mathematics},
  volume={144},
  publisher={Marcel Dekker Inc.},
  place={New York},
  date={1991},  
}

\bib{gelbaumolmsted1964}{book}{
  author={Gelbaum, B.R.},  
  author={Olmsted, J.M.H.},  
  title={Counterexamples in analysis},
  series={The Mathesis Series},
  publisher={Holden-Day Inc.},
  place={San Francisco, Calif.},
  date={1964},  
}

\bib{gelbaumolmsted2003}{book}{
  author={Gelbaum, B.R.},
  author={Olmsted, J.M.H.},
  title={Counterexamples in analysis},
  note={Corrected reprint of the second (1965) edition},
  publisher={Dover Publications Inc.},
  place={Mineola, NY},
  date={2003}, 
}

\bib{gurariyquarta2004}{article}{
  author={Gurariy, V.I.},
  author={Quarta, L.},
  title={On lineability of sets of continuous functions},
  journal={J. Math. Anal. Appl.},
  volume={294},
  date={2004},
  pages={62--72},
}

\bib{HoY}{book}{
  author={Hocking, J.G.},
  author={Young, G.S.},
  title={Topology},
  publisher={Dover},
  place={New York},
  date={1988},
}

\bib{jones1942}{article}{
  author={Jones, F.B.},
  title={Connected and disconnected plane sets and the functional equation $f(x)+f(y)=f(x+y)$},
  journal={Bull. Amer. Math. Soc.},
  volume={48},
  date={1942},
  pages={115--120},
}

\bib{jordan1998}{thesis}{
  author={Jordan, F.E.},
  title={Cardinal numbers connected with adding Darboux-like functions},
  type={Ph. D. Thesis},
  institution={West Virginia University, USA},
  date={1998},
}

\bib{kolmogorovfomin}{book}{
  author={Kolmogorov, A.M.},
  author={Fomin,S.V.},
  title={Elementos de la teor\'{\i}a de funciones y del an\'alisis funcional},
  language={Spanish},
  publisher={Mir},
  place={Mosc\'u},
  date={1975},
}

\bib{lebesgue1904}{book}{
  author={Lebesgue, H.},
  title={Le\c {c}ons sur l'int\'{e}gration et la recherche des fonctions primitives},
  language={French},
  publisher={Gauthier-Willars},
  date={1904},
}

\bib{moramira}{article}{
  author={Mora, G.},
  author={Mira, J.A.},
  title={Alpha-dense curves in infinite dimensional spaces},
  journal={Int. J. Pure Appl. Math.},
  volume={5(4)},
  date={2003},
  pages={437--449},
}


\bib{Osgood}{article}{
  author={Osgood, W.F.},
  title={A Jordan curve of positive area},
  journal={Trans. Amer. Math. Soc.},
  volume={4},
  date={1903},
  pages={107--112},
}

\bib{peano}{article}{
   author={Peano, G.},
   title={Sur une courbe, qui remplit toute une aire plane},
   language={French},
   journal={Math. Ann.},
   volume={36},
   date={1890},
   pages={157--160},
}

\bib{sagan}{book}{
   author={Sagan, H.},
   title={Space-filling curves},
   series={Universitext},
   publisher={Springer-Verlag, New York},
   date={1994},
}

\bib{seoane2006}{book}{
  author={Seoane-Sep\'{u}lveda, J.B.},
  title={Chaos and lineability of pathological phenomena in analysis},
  note={Thesis (Ph.D.)--Kent State University},
  publisher={ProQuest LLC, Ann Arbor, MI},
  date={2006},
  pages={139},
  isbn={978-0542-78798-0},
}

\bib{sternberg}{book}{
  author={Sternberg, S.Z.},
  title={Lectures on Differential Geometry},
  publisher={Prentice Hall, Englewood Cliffs},
  place={New Jersey},
  date={1964},
}

\bib{Ubeda2006}{thesis}{
  author={\'Ubeda Garc\'{\i}a, J.I.},
  title={Aspectos geom\'etricos y topol\'ogicos de las curvas $\al$-densas},
  language={Spanish},
  type={Ph. D. Thesis},
  institution={Universidad de Alicante, Spain},
  date={2006},
}

\bib{willard}{book}{
  author={Willard, S.},
  title={General Topology},
  publisher={Dover},
  place={New York},
  date={2004},
}

\bib{Zajicek}{article}{
   author={Zaji$\check{\rm c}$ek, L.},
   title={Sets of $\sigma$-porosity and sets of $\sigma$-porosity $(q)$},
   journal={Casopis Pest. Mat.},
   volume={101},
   date={1976},
   pages={350--359},
}

\bib{Zamfirescu}{article}{
   author={Zamfirescu, T.},
   title={Porosity in Convexity},
   journal={Real Analysis Exchange},
   volume={15},
   date={1989},
   pages={424--436},
}

\end{biblist}
\end{bibdiv}

\end{document}